\newtheorem{theorem}{Theorem}[section]
\newtheorem{lemma}[theorem]{Lemma}
\newtheorem{corollary}[theorem]{Corollary}
\newtheorem{conjecture}[theorem]{Conjecture}
\newtheorem{example}[theorem]{Example}
\newtheorem{proposition}[theorem]{Proposition}
\newtheorem{thmletter}{Theorem}
\newtheorem{corolletter}[thmletter]{Corollary}
\newcommand{\p}[1]{\noindent {\newline\bf #1.}}
\newcommand{\aut}{\operatorname{Aut}}
\newcommand{\core}{\operatorname{Core}}
\newcommand{\PCP}{$\operatorname{PCP_{FM}}$}
\newcommand{\PCPG}{$\operatorname{PCP_{FG}}$}
\newcommand{\EPG}{$\operatorname{AEP_{FG}}$}
\newcommand{\EPM}{$\operatorname{AEP_{FM}}$}
\newcommand{\rk}{\operatorname{rk}}
\newcommand{\eq}{\operatorname{Eq}}
\newcommand{\im}{\operatorname{Image}}
\newcommand{\fix}{\operatorname{Fix}}
\title[PCP and equalisers for certain morphisms]{The Post Correspondence Problem and equalisers for certain free group and monoid morphisms}
\author{Laura Ciobanu}
\address{ 
 Heriot-Watt University,   Edinburgh EH14 4AS,
 Scotland}
 \email{l.ciobanu@hw.ac.uk}
\author{Alan D. Logan}
\address{
Heriot-Watt University,   Edinburgh EH14 4AS,
 Scotland}
 \email{a.logan@hw.ac.uk}
\subjclass[2010]{20-06, 20E05, 20F10, 20M05, 68R15}
\keywords{Post Correspondence Problem, free group, free monoid, marked map, immersion.}
\begin{document}

\maketitle

\begin{abstract}
A marked free monoid morphism is a morphism for which the image of each generator starts with a different letter, and immersions are the analogous maps in free groups. We show that the (simultaneous) PCP is decidable for immersions of free groups, and provide an algorithm to compute bases for the sets, called equalisers, on which the immersions take the same values. We also answer a question of Stallings about the rank of the equaliser.  

Analogous results are proven for marked morphisms of free monoids.\end{abstract}

\section{Introduction}

In this paper we prove results about the classical Post Correspondence Problem (\PCP{}), which we state in terms of equalisers of free monoid morphisms, and the analogue problem \PCPG{} for free groups (\cite{CMV}, \cite{Myasnikov2014Post}), and we describe the solutions to \PCP{} and \PCPG{} for certain classes of morphisms. While the classical \PCP{} is famously undecidable for arbitrary maps of free monoids \cite{Post1946Correspondence} (see also the survey \cite{Harju1997Morphisms} and the recent result of Neary \cite{Neary2015undecidability}),
\PCPG{} for free groups is an important open question \cite[Problem 5.1.4]{Dagstuhl2019}. Additionally, for both free monoids and free groups there are only few results describing algebraically the solutions to classes of instances known to have decidable \PCP{} or \PCPG{}. Our results apply to \emph{marked} morphisms in the monoid case, and to their counterparts in free groups, called  \emph{immersions}.
Marked morphisms are the key tool used in resolving the \PCP{} for the free monoid of rank two \cite{Ehrenfeucht1982generalized}, and therefore understanding the solutions to the \PCPG{} for immersions is an important step towards resolving the \PCPG{} for the free group of rank two.
The density of marked morphisms and immersions among all the free monoid or group maps is strictly positive (Section \ref{sec:Density}), so our results concern a significant proportion of instances.

An \emph{instance} of the \PCP{} is a tuple $I=(\Sigma, \Delta, g, h)$, where $\Sigma, \Delta$ are  finite alphabets, $\Sigma^*, \Delta^*$ are the respective free monoids, and $g, h: \Sigma^*\rightarrow \Delta^*$ are morphisms. The \emph{equaliser} of $g, h$ is
$\eq(g, h)=\{x\in \Sigma^* \mid g(x)=h(x)\}.$ The \PCP{} is the decision problem:

\noindent{\newline
$\pmb{\operatorname{PCP_{FM}}}$:
Given $I=(\Sigma, \Delta, g, h)$, is the equaliser $\eq(g, h)$ trivial?}
\newline

Analogously, an instance of the \PCPG{} is a four-tuple $I=(\Sigma, \Delta, g, h)$ with $g, h: F(\Sigma)\rightarrow F(\Delta)$ morphisms between the free groups $F(\Sigma)$ and $F(\Delta)$, and \PCPG{} is the decision problem pertaining to the similarly defined $\eq(g, h)$ in free groups. 

Beyond \PCP{}, in this paper we also consider the \emph{Algorithmic Equaliser Problem}, denoted \EPM{} (or \EPG{} in the group case), which for an instance $I=(\Sigma, \Delta, g, h)$ with $g, h$ free monoid morphisms (or free group morphisms for \EPG{}), says:

\noindent{\newline
$\pmb{\operatorname{AEP_{FM}}}$:
Given $I=(\Sigma, \Delta, g, h)$, output}
\begin{enumerate}[label={(\alph*)}]
\item\label{AEP:Pta} a finite basis for $\eq(g, h)$, or
\item\label{AEP:Ptb} a finite automaton recognising the set $\eq(g, h)$.
\end{enumerate}

If a finite basis or finite automaton for $\eq(g, h)$ does not exist then Part (a) or (b), respectively, of the problem is {insoluble}.
Note that (a) and (b) are connected: for free groups these two problems are in fact the same when $\eq(g, h)$ is finitely generated, while for free monoids (\ref{AEP:Pta}) implies (\ref{AEP:Ptb}).
Part (\ref{AEP:Pta}) of the \EPM{} is known to be soluble when $|\Sigma|=2$ and one of $g$ or $h$ is non-periodic, and insoluble otherwise \cite{holub2003binary} \cite[Corollary 6]{Harju1997Morphisms}.

\p{Sets of morphisms}
We are particularly interested in sets $S$ of morphisms (not just two morphisms $f$, $g$) and their equalisers $\eq({S})=\bigcap_{g, h\in S}\eq(g, h)$, and we prove structural results for arbitrary sets and algorithmic results for finite sets.
Our results resolve the \emph{simultaneous \PCPG{} and \PCP{}} for immersions and marked morphisms; these problems take as input a finite set $S$ of maps and ask the same questions about equalisers as in the classical setting.
Analogously, one could further define the ``simultaneous \EPG{} and \EPM{}''. However, the simultaneous \EPG{} is equivalent to the \EPG{}, and Part (\ref{AEP:Ptb}) of the simultaneous \EPM{} is equivalent to Part (\ref{AEP:Ptb}) of the \EPM{}, as follows.
As bases of intersections of finitely generated subgroups of free groups are computable (and as Parts (\ref{AEP:Pta}) and (\ref{AEP:Ptb}) of the \EPG{} are equivalent), if the \EPG{} is soluble for a class $\mathcal{C}$ of maps then there exists an algorithm with input a finite set ${S}$ of morphisms from $F(\Sigma)$ to $F(\Delta)$, $S \subseteq \mathcal{C}$, and output a basis for $\eq({S})$.
Similarly, automata accepting intersections of regular languages are computable, and so if Part (\ref{AEP:Ptb}) of the \EPM{} is soluble for a class $\mathcal{C}$ of maps then there exists an algorithm with input a finite set ${S}$ of morphisms from $\Sigma^*$ to $\Delta^*$, $S \subseteq \mathcal{C}$, and output a finite automaton whose language is $\eq({S})=\bigcap_{g, h\in S}\eq(g, h)$.

\p{Main results}
A set of words $\mathbf{s}\subseteq\Delta^{\ast}$ is \emph{marked} if any two distinct $u, v\in\mathbf{s}$ start with a different letter of $\Delta$, which implies $|\mathbf{s}|\leq |\Delta|$. A free monoid morphism ${f}: \Sigma^*\rightarrow\Delta^*$ is \emph{marked} if the set ${f}(\Sigma)$ is marked.
An \emph{immersion of free groups} is a morphism $f: F(\Sigma)\rightarrow F(\Delta)$ where the set $f(\Sigma\cup\Sigma^{-1})$ is marked (see Section \ref{sec:immersions} for equivalent formulations). Halava, Hirvensalo and de Wolf \cite{Halava2001marked} showed that \PCP{} is decidable for marked morphisms; inspired by their methods we were able to obtain stronger results (Theorem \ref{thm:mainMonoids}) for this kind of map, as well as expand to the world of free groups (Theorem \ref{thm:main}), where we employ `finite state automata'-like objects called \emph{Stallings graphs}.
\begin{thmletter}
\label{thm:mainMonoids}
If ${S}$ is a set of marked morphisms from $\Sigma^*$ to $\Delta^*$, then there exists a finite alphabet $\Sigma_{{S}}$ and a marked morphism ${\psi}_{{S}}:\Sigma_{{S}}^*\rightarrow \Sigma^*$ such that $\im(\psi_{{S}})=\eq({S})$. Moreover, for ${S}$ finite, there exists an algorithm with input ${S}$ and output the marked morphism ${\psi}_{{S}}$.
\end{thmletter}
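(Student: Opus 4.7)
The plan is to establish the existence of $\Sigma_S$ and $\psi_S$ by a direct structural analysis of $\eq({S})$ inside $\Sigma^*$, and then to obtain the algorithm in the finite case by constructing a finite deterministic automaton, in the spirit of Halava--Hirvensalo--de Wolf, whose accepted language is $\eq({S})$ and whose simple loops at the start state yield the marked basis.

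Two easy observations underpin the structural half: $\eq({S})$ is a submonoid of $\Sigma^*$, and it is \emph{left-divisible}, in the sense that $u,uv\in\eq({S})$ implies $v\in\eq({S})$ (by cancellativity in $\Delta^*$). The non-trivial input is the following \emph{marked-prefix lemma}: if $u,v\in\eq({S})$ start with the same letter and $w$ is their longest common prefix in $\Sigma^*$, then $w\in\eq({S})$. The proof uses markedness essentially: writing $u=ws_1$ and $v=ws_2$ with $s_1,s_2$ beginning with distinct letters $b_1\neq b_2$, for each $g\in S$ the images $g(s_1),g(s_2)$ begin with distinct letters of $\Delta$ because $g$ is marked, so the longest common prefix of $g(u)$ and $g(v)$ equals $g(w)$. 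But $g(u)$ and $g(v)$ are independent of the choice of $g\in S$ (as $u,v\in\eq({S})$), hence so is $g(w)$, giving $w\in\eq({S})$.

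Defining $\Sigma_S\subseteq\eq({S})$ to be the set of \emph{atoms} (non-identity elements not expressible as a product of two non-identity elements of $\eq({S})$), the marked-prefix lemma together with left-divisibility immediately forces distinct atoms to start with distinct letters of $\Sigma$: two atoms sharing a first letter would produce, via the lemma, a non-trivial factorization of at least one of them in $\eq({S})$. This gives both $|\Sigma_S|\leq|\Sigma|$ and the markedness of $\Sigma_S$. An induction on length shows every element of $\eq({S})$ factors into atoms, and uniqueness of the factorization follows by comparing first letters and cancelling; thus $\eq({S})$ is a free monoid on $\Sigma_S$, and the inclusion $\Sigma_S\hookrightarrow\Sigma^*$ extends to the required marked morphism $\psi_S:\Sigma_S^*\rightarrow\Sigma^*$ with $\im(\psi_S)=\eq({S})$.

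For the algorithm when $S$ is finite, I would fix $g_0\in S$ and, for each $h\in S$, construct the standard pair-of-suffixes automaton $A_{g_0,h}$ on alphabet $\Sigma$, whose states record the unmatched suffix in $\Delta^*$ after reading a prefix of the input, with unique accept state $(\epsilon,\epsilon)$. Markedness of $g_0$ and $h$ makes $A_{g_0,h}$ deterministic and, crucially, bounds its state set: from a state $(u,\epsilon)$ with $u\neq\epsilon$, the first letter of $u$ prescribes the required first letter of $h(a)$, which by markedness of $h$ uniquely determines the admissible $a\in\Sigma$. The product of the $A_{g_0,h}$ over $h\in S$ is then a finite deterministic automaton $A_S$ accepting $\eq({S})$, and the atoms $\Sigma_S$ are read off as the labels of simple loops at its start state. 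The main technical obstacle is controlling the state space of each $A_{g_0,h}$, i.e., confirming that only finitely many unmatched suffixes are reachable from $(\epsilon,\epsilon)$; once that reachability bound is in place, the rest is routine finite-graph enumeration.
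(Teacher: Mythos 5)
Your structural argument for the existence of $\Sigma_S$ and $\psi_S$ is correct and is a genuinely different, more self-contained route than the paper's: the paper obtains the non-algorithmic statement by first proving the pair case algorithmically (via the Halava--Hirvensalo--de Wolf reduction process) and then intersecting the resulting ``marked subsemigroups'', whereas your marked-prefix lemma plus left-divisibility plus the atom decomposition derives the free, marked structure of $\eq(S)$ directly from markedness of the morphisms in $S$, with the bound $|\Sigma_S|\leq|\Sigma|$ falling out of the distinct-first-letter property of atoms. That half of the proposal I would accept as written.

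The algorithmic half, however, has a genuine gap, and the claim you defer to as ``the main technical obstacle'' is in fact false. Only finitely many unmatched suffixes need \emph{not} be reachable from $(\epsilon,\epsilon)$ for marked morphisms. Take $\Sigma=\{a,b\}$, $\Delta=\{x,y\}$, $g(a)=xx$, $g(b)=y$, $h(a)=x$, $h(b)=y$: both maps are marked, $\eq(g,h)=\{b\}^*$ is nontrivial, yet the suffix automaton reaches the infinite family of states $(x^n,\epsilon)$ by reading $a^n$. Markedness gives you determinism (out-degree at most one from each overhang state, as you observe), but determinism is perfectly compatible with an infinite ray of states. The states that actually matter are those that are both reachable and \emph{co-reachable} to $(\epsilon,\epsilon)$, and one can show these are finite in number --- but only by observing that they are the overhangs of prefixes of a finite generating set of $\eq(g_0,h)$, which presupposes the very finite generation you are trying to establish algorithmically; and even granting finiteness, you have no procedure to compute the co-reachable part of an infinite automaton. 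This is precisely the point where the paper has to work: it replaces the suffix automaton by the iterated block/reduction construction, shows the prefix complexity $\sigma$ does not increase under reduction, and uses the resulting finiteness of instances of bounded complexity to detect one of three terminating configurations (singleton alphabet, all images of length one, or a cycle), each of which is then analysed separately. Some substitute for that termination argument is indispensable; without it the algorithmic claim, and hence the decidability corollaries, remain unproved.
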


\begin{corolletter}
\label{corol:SPCPMonoid}
The simultaneous \PCP{} is decidable for marked morphisms of free monoids.
\end{corolletter}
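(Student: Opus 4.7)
The plan is to derive Corollary \ref{corol:SPCPMonoid} directly from Theorem \ref{thm:mainMonoids}, which already handles arbitrary sets $S$ of marked morphisms---including the finite case relevant to the simultaneous \PCP{}. Given a finite set $S$ of marked morphisms from $\Sigma^*$ to $\Delta^*$, the simultaneous \PCP{} asks whether the common equaliser $\eq(S) = \bigcap_{g, h \in S} \eq(g, h)$ equals $\{\epsilon\}$. So the task reduces to converting the structural/algorithmic description of $\eq(S)$ supplied by Theorem \ref{thm:mainMonoids} into an effective triviality test.

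First, I would run the algorithm of Theorem \ref{thm:mainMonoids} on $S$ to produce the marked morphism $\psi_S : \Sigma_S^* \to \Sigma^*$ satisfying $\im(\psi_S) = \eq(S)$. Because $\psi_S$ is returned explicitly, I have access to the finite list of images $\{\psi_S(x) : x \in \Sigma_S\}$, which generates $\im(\psi_S)$ as a submonoid of $\Sigma^*$.

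Next, I would decide triviality: since $\eq(S) = \im(\psi_S)$, the equaliser equals $\{\epsilon\}$ if and only if $\psi_S(x) = \epsilon$ for every $x \in \Sigma_S$. Indeed, if some generator $x$ satisfies $\psi_S(x) \neq \epsilon$, then the powers $\psi_S(x^n) = \psi_S(x)^n$ are pairwise distinct elements of $\eq(S)$; conversely, if $\psi_S$ kills every generator, then $\im(\psi_S) = \{\epsilon\}$. Given the explicit output of the algorithm, this is an inspection of finitely many words.

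There is no substantive obstacle at this stage: the content of the corollary is entirely absorbed by Theorem \ref{thm:mainMonoids}, and the only remaining observation is that a finitely generated submonoid of $\Sigma^*$, presented by an explicit list of generators, trivially has decidable triviality problem.
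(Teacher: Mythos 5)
Your proposal is correct and matches the paper's own argument: the paper likewise derives this corollary by running the algorithm of Theorem~\ref{thm:mainMonoids} (via Corollary~\ref{corol:basisMonoidSETS}) to obtain an explicit basis for $\eq(S)$ and then observing that $\eq(S)$ is trivial if and only if that basis is empty. Your triviality test (``$\psi_S(x)=\epsilon$ for every $x\in\Sigma_S$'') is equivalent to the paper's (``the basis is empty''), since marked morphisms are injective and hence non-erasing.
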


\begin{thmletter}
\label{thm:main}
If $S$ is a set of immersions from $F(\Sigma)$ to $F(\Delta)$, then there exists a finite alphabet $\Sigma_S$ and an immersion $\psi_S:F(\Sigma_S)\rightarrow F(\Sigma)$ such that $\im(\psi_S)=\eq(S)$. Moreover, when $S$ is finite, there exists an algorithm with input $S$ and output the immersion $\psi_S$.
\end{thmletter}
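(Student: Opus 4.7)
The plan is to mirror the proof of Theorem \ref{thm:mainMonoids}, using Stallings graphs as the free-group analogue of the marked-automaton machinery used in the monoid case, and to reduce the set case to a pair of immersions via standard fibre-product/folding.

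For a pair $f, g: F(\Sigma) \to F(\Delta)$ of immersions, I would build a $\Sigma$-labelled directed graph $\Gamma_{f, g}$ whose vertices are ``offsets'' $r \in F(\Delta)$ — morally, $r = f(u)^{-1} g(u)$ after reading a prefix $u$ — with basepoint $r_0 = 1$. For each $\sigma \in \Sigma \cup \Sigma^{-1}$, add an edge from $r$ labelled $\sigma$ to the free reduction of $f(\sigma)^{-1} r g(\sigma)$. Closed paths at $r_0$ read off precisely the elements of $\eq(f, g) \subseteq F(\Sigma)$, and the graph is folded by construction, since sources and targets of labelled edges are uniquely determined by the inverse formula $r \mapsto f(\sigma) r g(\sigma)^{-1}$. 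The \emph{core} — the subgraph of vertices lying on at least one loop at $r_0$ — should then be shown finite, via a length/potential argument bounded by $\max_\sigma(|f(\sigma)|, |g(\sigma)|)$: offsets which grow too long cannot lie on a return loop without producing a cycle, and the number of vertices of bounded length is finite.

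The critical step is showing that a basis for the fundamental group of the core (read from a spanning tree) is \emph{marked} in $F(\Sigma)$, so that the induced $\psi_{f, g}$ is an immersion. This is the strongest content of the theorem: not every finitely generated subgroup of $F(\Sigma)$ is the image of an immersion, so the immersion hypothesis on $f, g$ must enter essentially. The target statement is that the core of $\Gamma_{f,g}$ has (or reduces to) the structure of a bouquet of loops based at $r_0$, each petal starting with a distinct $\Sigma$-label, yielding a marked basis directly. Combinatorially, this should follow from the fact that each offset can be returned to the identity in at most one way consistent with the marked images of $f$ and $g$ at each step, so return loops at $r_0$ separate into petals indexed by their initial $\Sigma$-letter.

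For a finite set $S$, one iterates: classical Stallings pullbacks compute intersections of immersion-image subgroups, and I would verify inductively that the pullback of two immersion-image subgroups is again an immersion image, using the pair argument. For arbitrary $S$, the rank bound coming from any single pair $\eq(f, g)$ places the chain of intersections inside a fixed finite-rank subgroup; since each further intersection either strictly reduces the Stallings graph or leaves it unchanged, stabilisation occurs after finitely many pairs, giving a finite $\Sigma_S$ and an immersion $\psi_S$. The algorithmic statement for finite $S$ follows from making the above construction explicit: build $\Gamma_{f,g}$ outward from $r_0$ until the core stabilises, intersect pairwise, read off the marked basis. The main obstacle, which I expect to require most of the work, is the immersion-to-immersion transfer described above — careful local analysis at the basepoint combining folding and markedness to show that the Nielsen generators extracted from the core really are marked in $F(\Sigma)$.
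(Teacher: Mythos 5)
Your top-level architecture (a folded-automaton construction for a pair, induction via pullbacks for finite $S$, a separate limiting argument for infinite $S$) parallels the paper's, and your offset graph is a legitimate alternative route for the pair case: the paper instead iterates a ``reduction'' $I\mapsto I'$ built from $\core_{v_{g,h}}(\Gamma_g\otimes\Gamma_h)$ and controls termination via a prefix complexity $\sigma$. However, the proposal has two genuine gaps. The first is the finiteness and computability of the core of your offset graph, which is exactly the hard content of the theorem and which your potential argument does not establish. The offset after a prefix $u$ of $w\in\eq(f,g)$ has length $\bigl||f(u)|-|g(u)|\bigr|$, and the recursion $r_i\,g(\sigma_{i+1})=f(\sigma_{i+1})\,r_{i+1}$ only gives $|r_i|=|r_{i+1}|+|f(\sigma_{i+1})|-|g(\sigma_{i+1})|$, which permits the offset to grow over several steps before returning to the identity; nothing you say rules out offsets on return loops exceeding $\max_\sigma(|f(\sigma)|,|g(\sigma)|)$, and since for general injective morphisms the core really is infinite (equalisers need not be finitely generated), the immersion hypothesis must enter this step in an essential and explicit way. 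This is what the paper's Sections 4--6 supply: Lemma \ref{lem:PrefixComplexity} shows reduction does not increase $\sigma$, Lemma \ref{lem:DistinctInstances} bounds the number of instances of bounded complexity, and Lemma \ref{lem:BasisOfReduction3} resolves the cycling case by a length-preservation argument. (Your markedness step, by contrast, can be closed directly: if $x=zx'$ and $y=zy'$ lie in $\eq(f,g)$ with $z$ their maximal common prefix, writing $h(z)=g(z)r$ forces $g(x')=r\,h(x')$ and $g(y')=r\,h(y')$, and since $x',y'$ begin with distinct letters and $g$ is marked, $r$ is trivial; this is the argument of Lemma \ref{lem:mainINFINITE}.)

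The second gap is your treatment of infinite $S$, which is wrong as stated: a descending chain of immersed subgroups of $F(\Sigma)$ of bounded rank need not stabilise. For example $\langle a\rangle>\langle a^2\rangle>\langle a^4\rangle>\cdots$ is a chain of immersed rank-one subgroups whose Stallings graphs strictly grow, so the claim that ``each further intersection either strictly reduces the Stallings graph or leaves it unchanged'' fails, and stabilisation after finitely many pairs does not follow. The paper avoids this entirely by proving directly (Lemma \ref{lem:mainINFINITE}) that an \emph{arbitrary} intersection of immersed subgroups is immersed, via the maximal-common-prefix argument sketched above, and then using the rank bound $|\Sigma_S|\leq|\Sigma|$ to conclude that the alphabet is finite; you should replace the stabilisation claim with an argument of this kind.
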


\begin{corolletter}
\label{corol:SPCP}
The simultaneous \PCPG{} is decidable for immersions of free groups.
\end{corolletter}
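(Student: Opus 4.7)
The plan is to derive the corollary directly from Theorem \ref{thm:main}: given a finite set $S$ of immersions as input, the theorem supplies an algorithm that outputs the alphabet $\Sigma_S$ and the immersion $\psi_S:F(\Sigma_S)\rightarrow F(\Sigma)$ with $\im(\psi_S)=\eq(S)$. Since the simultaneous \PCPG{} asks whether $\eq(S)$ is trivial, it suffices to decide algorithmically whether $\im(\psi_S)=\{1\}$.

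I would then observe that the image of a free group morphism $\psi_S$ is trivial if and only if $\psi_S$ sends every generator of $F(\Sigma_S)$ to the identity. Having computed $\psi_S$ explicitly in the previous step, we have access to the finitely many words $\psi_S(x)$ for $x\in\Sigma_S$, so we can check this condition by inspection: the equaliser $\eq(S)$ is trivial precisely when either $\Sigma_S=\emptyset$ or every such $\psi_S(x)$ is the empty word. This check is straightforward and terminates in finite time.

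Combining these two steps gives an algorithm whose input is a finite set $S$ of immersions and whose output is the correct yes/no answer to whether $\eq(S)$ is trivial, establishing decidability of the simultaneous \PCPG{} for immersions. The main (indeed only) obstacle is Theorem \ref{thm:main} itself, which has already been stated; the reduction from simultaneous \PCPG{} to the computation of $\psi_S$ is the routine part, since triviality of a subgroup generated by a given finite set of reduced words in a free group is visibly decidable.
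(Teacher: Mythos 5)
Your proposal is correct and follows essentially the same route as the paper: apply Theorem \ref{thm:main} to compute $\psi_S$ explicitly as a list of images of generators, then decide triviality of $\eq(S)=\im(\psi_S)$ by inspecting that finite list. (The paper phrases the final check as ``the basis is empty,'' which is equivalent to your check since $\psi_S$, being an immersion, is injective and so cannot send a generator to the empty word.)
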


\p{The Equaliser Conjecture}
Our work was partially motivated by Stallings' Equaliser Conjecture for free groups, which dates from 1984 \cite[Problems P1 \& 5]{Stallings1987Graphical} (also \cite[Problem 6]{Dicks1996Group} \cite[Conjecture 8.3]{Ventura2002Fixed} \cite[Problem F31]{Baumslag2002Open}). Here $\rk(H)$ stands for the \emph{rank}, or minimum number of generators, of a subgroup $H$:
\begin{conjecture}[The Equalizer Conjecture, 1984]
\label{Qn:StallingsRank}
If $g, h: F(\Sigma)\rightarrow F(\Delta)$ are injective morphisms then $\rk(\eq(g, h))\leq|\Sigma|$.
\end{conjecture}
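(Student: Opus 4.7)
My approach would be to leverage Theorem B directly for the immersion case and then attempt a reduction from the general injective case to the immersion case, which I expect to be the main obstacle.

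For the immersion case, suppose $g,h$ are both immersions. By Theorem B applied to $S=\{g,h\}$, there is an immersion $\psi:F(\Sigma_S)\to F(\Sigma)$ with $\im(\psi)=\eq(g,h)$. Since an immersion between free groups is injective (as it preserves reduced word length upon composition with itself, cf.\ the Stallings-graph characterisation in Section \ref{sec:immersions}), $\psi$ restricts to an isomorphism $F(\Sigma_S)\cong\eq(g,h)$, so $\rk(\eq(g,h))=|\Sigma_S|$. The first step is therefore to inspect the algorithmic construction of $\psi_S$ and read off the bound $|\Sigma_S|\le|\Sigma|$. I would expect $\Sigma_S$ to parameterise the ``petals'' of a Stallings-graph-like object associated to the pair $(g,h)$, so an Euler-characteristic or Nielsen--Schreier argument comparing this graph with the bouquet of $|\Sigma|$ circles (the Stallings graph of $F(\Sigma)$) should yield $|\Sigma_S|\le|\Sigma|$. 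This would settle the conjecture for pairs of immersions.

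For the general case, the strategy is to reduce from arbitrary injective morphisms to immersions. Given injective $g,h:F(\Sigma)\to F(\Delta)$, the natural construction is to pass to a covering: let $\Gamma$ be the Stallings graph of the subgroup $\langle g(\Sigma),h(\Sigma)\rangle$, fold the two maps $g,h$ (viewed as labelled immersions of a rose into $\Gamma$) into a common refinement, and hope to lift $(g,h)$ to a pair of immersions $(\widetilde{g},\widetilde{h})$ on some cover of the rose. One then has to relate $\eq(g,h)$ to $\eq(\widetilde{g},\widetilde{h})$ and apply the immersion case to the lift, then descend the rank bound. Alternatively, one could pre-compose $g$ and $h$ with a suitable automorphism or finite-index subgroup inclusion to turn them into immersions without changing the rank of the equaliser by more than a controlled amount.

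The hard part, and the reason this conjecture has remained open since 1984, will be the descent step: lifting to immersions can inflate the rank of the domain, and controlling the rank of $\eq(g,h)$ in terms of $|\Sigma|$ rather than the rank of the covering group requires an honest accounting of how the equaliser sits inside the covering. A secondary obstacle is that general injective morphisms do not admit a clean pullback description in the category of Stallings graphs, so the image-of-a-single-morphism structure guaranteed by Theorem B may fail; one may at best obtain $\eq(g,h)$ as a union of images or as the fundamental group of a subgraph of a pullback, and bounding the rank of such an object by $|\Sigma|$ seems to require genuinely new input beyond the marked/immersion machinery of this paper.
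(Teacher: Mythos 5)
The statement you are trying to prove is stated in the paper as a \emph{conjecture} (Conjecture \ref{Qn:StallingsRank}); the paper offers no proof of it, and it remains open. What the paper actually establishes is only the special case where both maps are immersions, namely Corollary \ref{thm:rankSETS}. Your treatment of that special case is essentially the paper's own argument: apply Theorem \ref{thm:main} to $S=\{g,h\}$ to get an immersion $\psi_S:F(\Sigma_S)\rightarrow F(\Sigma)$ with $\im(\psi_S)=\eq(g,h)$, and then observe that $|\Sigma_S|\leq|\Sigma|$ because an immersion forces the images of the letters of $\Sigma_S^{\pm1}$ to begin with distinct letters of $\Sigma^{\pm1}$ (the paper phrases this as ``as $\psi_{g,h}$ is an immersion we have that $|\Sigma_{g,h}|\leq|\Sigma|$''; injectivity of $\psi_S$ then gives $\rk(\eq(g,h))\leq|\Sigma_S|$). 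So that half of your proposal is correct and matches the paper. (Minor point: the relevant result is Theorem \ref{thm:main}, not Corollary \ref{corol:SPCPMonoid}.)

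The genuine gap is the second half: the passage from arbitrary injective morphisms to immersions. You describe several candidate strategies (lifting to a cover, pre-composing with an automorphism or finite-index inclusion, pullbacks of Stallings graphs), but none of them is carried out, and you yourself identify the fatal issues: lifting can inflate the rank of the domain, general injective morphisms do not fold into immersions without changing the equaliser in an uncontrolled way, and the equaliser of a non-immersed pair need not be the image of a single map of a rose with at most $|\Sigma|$ petals. These are not technical details to be filled in later; they are precisely why the conjecture has been open since 1984, and the paper explicitly positions Corollary \ref{thm:rankSETS} as ``answering Conjecture \ref{Qn:StallingsRank} for immersions'' rather than proving it. As written, your proposal proves the immersion case and only conjectures a route to the general case, so it does not constitute a proof of the statement.
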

 
This conjecture has its roots in ``fixed subgroups'' $\fix(\phi)$ of free group endomorphisms $\phi: F(\Sigma)\rightarrow F(\Sigma)$ (if $\Sigma=\Delta$ then $\fix(\phi)=\eq(\phi, \operatorname{id})$), where Bestvina and Handel proved that $\rk(\fix(\phi))\leq|\Sigma|$ for $\phi$ an automorphism \cite{Bestvina1992Traintracks}, and Imrich and Turner extended this bound to all endomorphisms \cite{Imrich1989Endomorphisms}. Bergman further extended this bound to all sets of endomorphisms \cite{bergman1999supports}.
Like Bergman's result, our first corollary of Theorem \ref{thm:main} considers sets of immersions, which are injective, and answers Conjecture \ref{Qn:StallingsRank} for immersions.
\begin{corolletter}
\label{thm:rankSETS}
If $S$ is a set of immersions from $F(\Sigma)$ to $F(\Delta)$ then $\rk(\eq(S))\leq|\Sigma|$.
\end{corolletter}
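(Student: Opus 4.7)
The strategy is to read off the bound directly from Theorem \ref{thm:main}. Applying that theorem to the set $S$ produces an alphabet $\Sigma_S$ and an immersion $\psi_S : F(\Sigma_S) \to F(\Sigma)$ with $\im(\psi_S) = \eq(S)$. Since $\eq(S) = \im(\psi_S)$ is generated by the $|\Sigma_S|$ elements $\psi_S(x)$, $x \in \Sigma_S$, the definition of rank gives $\rk(\eq(S)) \leq |\Sigma_S|$, so the entire problem reduces to showing $|\Sigma_S| \leq |\Sigma|$.

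For this I would use two features of the immersion $\psi_S$. First, immersions are injective (one of the equivalent formulations in Section \ref{sec:immersions} should record this, as it is a standard consequence of the locally injective Stallings-graph description of an immersion), so $\psi_S$ is injective on $\Sigma_S \cup \Sigma_S^{-1}$ and hence $|\psi_S(\Sigma_S \cup \Sigma_S^{-1})| = 2|\Sigma_S|$. Second, by the very definition of immersion the set $\psi_S(\Sigma_S \cup \Sigma_S^{-1})$ is marked in $F(\Sigma)$, so its elements have pairwise distinct first letters drawn from $\Sigma \cup \Sigma^{-1}$. Combining these two facts yields
\[
2|\Sigma_S| \;=\; |\psi_S(\Sigma_S \cup \Sigma_S^{-1})| \;\leq\; |\Sigma \cup \Sigma^{-1}| \;=\; 2|\Sigma|,
\]
so $|\Sigma_S|\leq|\Sigma|$, as required.

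I do not expect any real obstacle here: all the substantive content lives in Theorem \ref{thm:main}, and this corollary is a short counting argument once one knows that immersions are injective and that markedness caps the size of the generating image set by $|\Sigma \cup \Sigma^{-1}|$. The only point that needs checking is that the immersion $\psi_S$ built in Theorem \ref{thm:main} is genuinely injective on the alphabet $\Sigma_S$; should the construction ever produce distinct generators with the same image, one could simply collapse $\Sigma_S$ accordingly without changing $\im(\psi_S)$ or disturbing the marked condition, so the bound is not affected.
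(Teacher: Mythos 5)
Your proposal is correct and follows essentially the same route as the paper's own proof: apply Theorem \ref{thm:main} to obtain the immersion $\psi_S:F(\Sigma_S)\rightarrow F(\Sigma)$ with $\im(\psi_S)=\eq(S)$, deduce $\rk(\eq(S))\leq|\Sigma_S|$, and bound $|\Sigma_S|\leq|\Sigma|$ from the immersion (marked) condition. The paper states the last inequality without elaboration, whereas you spell out the counting via distinct first letters in $\Sigma\cup\Sigma^{-1}$ together with injectivity; this is exactly the argument the paper makes explicit in the monoid analogue (Corollary \ref{corol:rankMonoid}).
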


In free monoids, equalisers of injections are free \cite[Corollary 4]{Harju1997Morphisms} but they are not necessarily regular languages (and hence not necessarily finitely generated) \cite[Example 6]{Harju1997Morphisms}. In order to understand equalisers $\eq(S)$ of sets of maps we need to understand intersections in free monoids. Recall that the intersection $A^*\cap B^*$ of two finitely generated free submonoids of a free monoid $\Sigma^*$ is free \cite{tilson1972intersection} and one can find a regular expression that represents a basis of $A^*\cap B^*$ \cite{blattner1977automata}. However, the intersection is not necessarily finitely generated \cite{karhumaki1984note}. The following result is surprising because we have finite generation, even for the intersection $\eq(S)=\bigcap_{g, h\in S}\eq(g, h)$. 

\begin{corolletter}
\label{corol:rankMonoid}
If ${S}$ is a set of marked morphisms from $\Sigma^*$ to $\Delta^*$ then $\eq({S})$ is a free monoid with $\rk(\eq({S}))\leq|\Sigma|$.
\end{corolletter}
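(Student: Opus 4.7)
The plan is to derive this corollary directly from Theorem \ref{thm:mainMonoids}. That theorem supplies a finite alphabet $\Sigma_{S}$ and a marked morphism $\psi_{S}:\Sigma_{S}^{\ast}\to\Sigma^{\ast}$ with $\im(\psi_{S})=\eq(S)$, so the task reduces to two verifications about $\psi_{S}$: that it is injective (from which freeness of the image follows, with $\psi_{S}(\Sigma_{S})$ as a basis), and that $|\Sigma_{S}|\leq|\Sigma|$.

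For the first point, I would record the standard fact that every marked free monoid morphism is injective. The reasoning is a short induction on length: if $\psi_{S}(a_{1}\cdots a_{n})=\psi_{S}(b_{1}\cdots b_{m})$, then the common word has a well-defined first letter, and since the images of distinct generators of $\Sigma_{S}$ begin with distinct letters of $\Sigma$, this first letter unambiguously identifies $a_{1}=b_{1}$; cancelling $\psi_{S}(a_{1})$ from both sides allows the induction to proceed. Consequently $\eq(S)=\im(\psi_{S})$ is a free submonoid of $\Sigma^{\ast}$ of rank $|\Sigma_{S}|$, with basis $\psi_{S}(\Sigma_{S})$.

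For the second point, the marked condition for $\psi_{S}$ says that distinct elements of $\psi_{S}(\Sigma_{S})$ start with distinct letters of $\Sigma$, so $|\psi_{S}(\Sigma_{S})|\leq|\Sigma|$. Combined with the injectivity just established (which gives $|\Sigma_{S}|=|\psi_{S}(\Sigma_{S})|$), this yields $\rk(\eq(S))=|\Sigma_{S}|\leq|\Sigma|$, completing the proof.

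There is essentially no obstacle at this stage: all the work is concentrated in Theorem \ref{thm:mainMonoids}. What the corollary adds is the observation that the conclusions of that theorem (markedness plus surjectivity onto $\eq(S)$) automatically deliver both the freeness of $\eq(S)$ and the Stallings-type rank bound, and in particular force finite generation of an intersection that, for arbitrary free monoid morphisms, could fail to be finitely generated.
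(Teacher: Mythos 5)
Your proposal is correct and follows essentially the same route as the paper: invoke Theorem \ref{thm:mainMonoids} to realise $\eq(S)$ as the image of a marked morphism $\psi_S:\Sigma_S^*\to\Sigma^*$, use injectivity of marked morphisms (the paper cites Lemma \ref{lem:MarkedInj}, which you re-derive inline) to get freeness, and use the marked condition to inject $\Sigma_S$ into $\Sigma$ via initial letters, giving the rank bound. No gaps.
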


\p{The Algorithmic Equaliser Problem}
The \EPG{} is insoluble in general, as equalisers in free groups are not necessarily finitely generated \cite[Section 3]{Ventura2002Fixed}, and is an open problem of Stallings' if both maps are injective \cite[Problems P3 \& 5]{Stallings1987Graphical}. Our next corollary of Theorem \ref{thm:main} resolves this open problem for immersions.

\begin{corolletter}
\label{thm:basis}
The \EPG{} is soluble for immersions of free groups.
\end{corolletter}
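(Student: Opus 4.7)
The plan is to derive Corollary~\ref{thm:basis} directly from Theorem~\ref{thm:main}, with a minimal additional step to extract a basis. Given an instance $I = (\Sigma, \Delta, g, h)$ of the \EPG{}, I apply Theorem~\ref{thm:main} to the finite set $S = \{g, h\}$. This produces, algorithmically, a finite alphabet $\Sigma_S$ and an immersion $\psi_S: F(\Sigma_S) \rightarrow F(\Sigma)$ with $\im(\psi_S) = \eq(g, h)$.

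The next step is to exhibit a finite basis of $\eq(g,h)$. Immersions are injective---the markedness of $\psi_S(\Sigma_S \cup \Sigma_S^{-1})$ permits unique left-to-right decoding, exactly as in the free-monoid case---so $\psi_S$ restricts to an isomorphism onto its image. Consequently the image of the free basis of $F(\Sigma_S)$ under $\psi_S$,
\[
B = \{\psi_S(x) : x \in \Sigma_S\},
\]
is a free basis of $\eq(g, h)$ of cardinality $|\Sigma_S|$. Output $B$: this resolves part~(\ref{AEP:Pta}) of the \EPG{}, and part~(\ref{AEP:Ptb}) then follows from the equivalence of the two parts for free groups on finitely generated equalisers, as noted in the introduction (alternatively, construct the Stallings graph of the subgroup $\langle B\rangle$ by folding the rose on $B$).

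There is essentially no obstacle remaining at this stage: the entire substance of the corollary is absorbed into Theorem~\ref{thm:main}, and the present statement merely repackages its output. The only point that must be checked explicitly is the injectivity of immersions, which parallels the injectivity of marked morphisms of free monoids and is available from the preliminary material on immersions and Stallings graphs in Section~\ref{sec:immersions}.
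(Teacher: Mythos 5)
Your proposal is correct and follows essentially the same route as the paper: obtain the immersion $\psi_{g,h}$ whose image is $\eq(g,h)$ (the paper invokes Theorem~\ref{thm:main1}, the $|S|=2$ case of Theorem~\ref{thm:main}, but this is the same result), and then read off the basis as the stored list of images of the generators, injectivity of immersions guaranteeing this is indeed a free basis. Your added remarks on part~(\ref{AEP:Ptb}) and on injectivity are consistent with the paper's Lemma~\ref{lem:injective} and introductory discussion.
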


The \EPM{} is insoluble in general, primarily as equalisers are not necessarily regular languages \cite[Example 4.6]{ehrenfeucht1978elementary}. Even for maps whose equalisers form regular languages, the problem remains insoluble  \cite{saarela2010noneffective}.
Another corollary of Theorem \ref{thm:mainMonoids} is the following.

\begin{corolletter}
\label{corol:basisMonoid}
The \EPM{} is soluble for marked morphisms of free monoids.
\end{corolletter}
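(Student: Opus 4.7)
The plan is to deduce the corollary essentially directly from Theorem~\ref{thm:mainMonoids} by observing that the image of a marked morphism is automatically a free monoid with an obvious basis.

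First, given an instance $I = (\Sigma, \Delta, g, h)$ of the \EPM{} where $g, h$ are marked, I would apply Theorem~\ref{thm:mainMonoids} to the finite set $S = \{g, h\}$. This produces, algorithmically, a finite alphabet $\Sigma_{S}$ and a marked morphism $\psi_{S} : \Sigma_{S}^{\ast} \to \Sigma^{\ast}$ with $\im(\psi_{S}) = \eq(g, h)$. Since the corollary addresses only two morphisms, one could equally apply the result to $S = \{g,h\}$ and obtain the same output.

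For Part~\ref{AEP:Pta}, I would verify that $\psi_{S}(\Sigma_{S})$ is itself a basis for $\eq(g,h)$. Because $\psi_{S}$ is marked, the images $\psi_{S}(a)$ for $a \in \Sigma_{S}$ all start with distinct letters of $\Sigma$; in particular no one is a prefix of another, so $\psi_{S}(\Sigma_{S})$ is a prefix code. Any prefix code is a code, i.e., it freely generates the submonoid it generates. Hence $\eq(g,h) = \im(\psi_{S})$ is the free monoid on the finite set $\psi_{S}(\Sigma_{S})$, and outputting this finite set solves Part~\ref{AEP:Pta}. (This also yields Corollary~\ref{corol:rankMonoid} in the two-morphism case, with rank at most $|\Sigma_{S}|$, and by construction $|\Sigma_{S}| \leq |\Sigma|$.)

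For Part~\ref{AEP:Ptb}, I would build a finite automaton accepting $\eq(g,h) = \psi_{S}(\Sigma_{S})^{\ast}$ from the finite basis just computed. The standard construction is to form the prefix tree of the finite word set $\psi_{S}(\Sigma_{S}) \subseteq \Sigma^{\ast}$, declare the root to be both the unique initial and unique accepting state, and add an $\varepsilon$-transition (or the appropriate determinised equivalent) from each leaf back to the root; since $\psi_{S}(\Sigma_S)$ is a prefix code, determinisation produces an automaton of size linear in the total length of the basis words. This solves Part~\ref{AEP:Ptb}.

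There is essentially no hard step here: the substantive content is entirely packaged inside Theorem~\ref{thm:mainMonoids}. The only point that requires a moment's thought is the observation that a marked morphism is automatically a coding, so that its image is a free monoid with the image of the generators as an explicit basis; once this is noted, both parts of the \EPM{} fall out immediately by standard automata-theoretic constructions.
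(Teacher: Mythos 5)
Your proposal is correct and follows essentially the same route as the paper: apply Theorem~\ref{thm:mainMonoids} to $S=\{g,h\}$ and read off the basis as the images $\psi_S(a)$ of the generators (the paper's Corollary~\ref{corol:basisMonoidSETS}), with Part~\ref{AEP:Ptb} following from Part~\ref{AEP:Pta} by a standard automaton construction, as the paper notes in the introduction. Your explicit justification that a marked morphism's image is a prefix code, hence freely generated by $\psi_S(\Sigma_S)$, is a detail the paper leaves implicit (it is essentially Lemma~\ref{lem:MarkedInj}), but it does not change the argument.
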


\p{Outline of the article}
In Section \ref{sec:PCPM} we prove Theorem \ref{thm:mainMonoids} and its corollaries.
The remainder of the paper focuses on free groups, where the central result is Theorem \ref{thm:main1}, which is Theorem \ref{thm:main} for $|S|=2$.
In Section \ref{sec:immersions} we reformulate immersions in terms of Stallings' graphs.
In Section \ref{sec:CoreGraph} we define the ``reduction'' $I'=(\Sigma', \Delta', g', h')$ of an instance $I=(\Sigma, \Delta, g, h)$ of the \EPG{} for immersions. Repeatedly computing reductions is the key process in our algorithm.
In Section \ref{sec:prefix} we prove the process of reduction reduces the ``prefix complexity'' of an instance (so the word ``reduction'' makes sense).
In Section \ref{sec:TheoremsForPairs} we prove Theorem \ref{thm:main1}, mentioned above.
In Section \ref{sec:MainResults} we prove Theorem \ref{thm:main} and its corollaries.
In Section \ref{sec:Complexity} we give a complexity analysis for both our free monoid and free group algorithms, and in Section \ref{sec:Density} we show that the density of marked morphisms and immersions among all the free monoid or group maps is strictly positive.

\section*{Acknowledgements}
The authors were supported by EPSRC Standard Grant EP/R035814/1. The first-named author would like to thank the organisers of the Dagstuhl seminar 19131 \emph{Algorithmic Problems in Group Theory}, where the topics addressed in this paper were discussed and listed as important open questions in the theory of free groups \cite[Direction 5.1.4]{Dagstuhl2019}.

\section{Marked morphisms in free monoids}
\label{sec:PCPM}
In this section we prove Theorem \ref{thm:mainMonoids} and its corollaries. We use the following immediate fact. 
\begin{lemma}
\label{lem:MarkedInj}
Marked morphisms of free monoids are injective.
\end{lemma}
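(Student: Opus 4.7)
The plan is a short induction on $|u|+|v|$ establishing that $f(u)=f(v)$ implies $u=v$ for $u,v\in\Sigma^{\ast}$. A preliminary observation is that a marked morphism cannot send any generator to the empty word: the bound $|f(\Sigma)|\le|\Delta|$ together with the ``distinct starting letters'' clause in the definition of a marked set rules out $\epsilon \in f(\Sigma)$, and in any case an empty image would destroy injectivity at once via $f(a)=f(aa)$. So one may assume every $f(a)$ has a well-defined first letter, and distinct generators give distinct first letters.

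With this in hand, the base case is immediate: if $u=\epsilon$ then $f(u)=\epsilon$, and since every non-empty word of $\Sigma^{\ast}$ has non-empty image, $v=\epsilon$ as well. For the inductive step, write $u=au'$ and $v=bv'$ with $a,b\in\Sigma$. Then $f(u)=f(a)f(u')$ begins with the first letter of $f(a)$, and $f(v)=f(b)f(v')$ begins with the first letter of $f(b)$; the equality $f(u)=f(v)$ forces these first letters to coincide, and by the marked hypothesis this coincidence yields $a=b$. Left-cancelling $f(a)$ from $f(a)f(u')=f(a)f(v')$ gives $f(u')=f(v')$, and the inductive hypothesis delivers $u'=v'$, hence $u=v$.

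There is no real obstacle here; the only point warranting care is confirming the preliminary observation that no generator has empty image, after which the argument is a routine head-induction reading off words one letter at a time under the marked prefix discipline.
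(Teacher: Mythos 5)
Your proof is correct and rests on the same mechanism as the paper's: the marked condition forces $f(a)$ and $f(b)$ to begin with different letters for $a\neq b$, so the image determines the word letter by letter. The paper phrases this by splitting off the maximal common prefix of two distinct words and comparing the first point of difference rather than running an induction, but the argument is essentially identical; your explicit treatment of the non-erasing point (and hence of the case where one word is a prefix of the other) is a small extra care that the paper leaves implicit.
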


\begin{proof}
Let $f: \Sigma^*\rightarrow \Delta^*$ be marked and let $x\neq y$ be nontrivial. One can write $x=zax'$ and $y=zby'$, where $a, b\in\Sigma$ are the first letter where $x$ and $y$ differ. As $f$ is marked, $f(a)\neq f(b)$, hence $f(x)=f(z)f(a)f(x')\neq f(z)f(b)f(y')=f(y)$, so $f$ is injective.
\end{proof}

We may assume $\Sigma\subseteq \Delta$, as $|\Sigma|\leq |\Delta|$ holds whenever $f: \Sigma^*\rightarrow \Delta^*$ is marked. 

Consider morphisms $g: \Sigma_1^*\rightarrow \Delta^*$ and $h: \Sigma_2^*\rightarrow \Delta^*$. The set of non-empty words over an alphabet $\Sigma$ is denoted $\Sigma^+$. For $a \in \Delta$, a pair $(u,v) \in \Sigma_1^+ \times \Sigma_2^+$ is an \emph{$a$-block} if (i) $g(u)=h(v)$ starts with $a$, and (ii) $u$ and $v$ are minimal, that is, the length $|g(u)|=|h(v)|$ is minimal among all such pairs.
If the pair $(g, h)$ has blocks $a_i=(u_i, v_i)$, $1 \leq i \leq m$, then let $\Sigma'$ be the alphabet consisting of these blocks and define $g':(\Sigma')^* \mapsto \Sigma_1^*$ by $g'(a_i)=u_i$ and $h':(\Sigma')^* \mapsto \Sigma_2^*$ by $h'(a_i)=v_i$. These maps are computable and, by an identical logic to \cite[Section 2]{Halava2001marked}, are seen to be marked. Then $gg'=hh'$, and we let $k=gg'=hh'$ (so $k: (\Sigma')^*\rightarrow \Delta^*$). Since $k$ is the composition of marked morphisms, it is itself marked. We therefore have the following.

\begin{lemma}
\label{lem:unfoldableretractsMonoids}
If $g:\Sigma_1^*\rightarrow \Delta^*$ and $h:\Sigma_2^*\rightarrow \Delta^*$ are marked morphisms then the corresponding maps $g': \Sigma'^*\rightarrow \Sigma_1^*$, $h': \Sigma'^*\rightarrow \Sigma_2^*$ and $k: \Sigma^*\rightarrow \Delta^*$, $k=gg'=hh'$, are marked and are computable.
\end{lemma}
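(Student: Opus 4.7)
The plan is to verify, in order, the four assertions of the lemma: the identity $gg' = hh'$ (so that $k$ is well-defined), that $g'$ and $h'$ are marked, that $k$ is therefore marked, and that the whole construction is algorithmic.

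The identity $gg' = hh'$ is immediate: by the very definition of an $a_i$-block, one has $g(u_i) = h(v_i)$, which reads as $gg'(a_i) = hh'(a_i)$ on each generator $a_i$ of $\Sigma'$, so the morphisms agree everywhere. For the markedness of $g'$, I would argue by contradiction: suppose two distinct block-letters $a_i \neq a_j$ give $g'(a_i) = u_i$ and $g'(a_j) = u_j$ starting with the same letter $c \in \Sigma_1$; then $g(u_i)$ and $g(u_j)$ both begin with the first letter of $g(c)$, contradicting that $g(u_i)$ starts with $a_i$ and $g(u_j)$ with $a_j \neq a_i$ in $\Delta$. Replacing $g$ by $h$ in this argument gives markedness of $h'$. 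For $k$, a one-line check shows that the composition of two marked morphisms is marked: distinct generators go under the first map to words starting with distinct letters, which then go under the second map to words starting with distinct letters; applied to $k = g \circ g'$, this gives the claim.

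The main content, and the step I expect to be the main obstacle, is computability, specifically computing the blocks. For each $a \in \Delta$ I would run a deterministic search: markedness forces the first letter of $u$ to be the unique $c_1 \in \Sigma_1$ with $g(c_1)$ starting with $a$ (failing immediately if none exists), and similarly for $v$; at each subsequent stage one of $g(u), h(v)$ is a proper prefix of the other with overflow suffix $w$, and markedness again forces the next letter of whichever of $u, v$ has the shorter image to be the unique generator whose $g$- or $h$-image begins with the first letter of $w$. The search either terminates with $|g(u)| = |h(v)|$, producing the $a$-block, or revisits an previously seen overflow state, certifying that no $a$-block exists; termination is guaranteed because the overflow has bounded length $\max_{c}|g(c)| + \max_{c}|h(c)|$, so there are only finitely many overflow states. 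This is precisely the deterministic block-search of \cite[Section 2]{Halava2001marked}; running it for every $a \in \Delta$ produces $\Sigma'$, $g'$, $h'$, and hence $k = gg'$.
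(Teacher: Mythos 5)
Your proposal is correct and follows essentially the same route as the paper, which defines the $a$-blocks, observes $gg'=hh'$ and that $k$ is a composition of marked morphisms, and defers the markedness and computability of $g'$, $h'$ to the block construction of Halava--Hirvensalo--de Wolf. You have simply filled in the details of that citation (the first-letter argument for markedness and the deterministic overflow search for computability), and both are sound.
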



The \emph{reduction} of an instance $I=(\Sigma, \Delta, g, h)$ of the marked \PCP{}, as defined in \cite{Halava2001marked}, is the instance $I':=(\Sigma', \Delta, g', h')$ where $\Sigma'$ is defined as above, and where $g'$ and $h'$ are as above, but with codomain $\Delta$ (which we may do as $\Sigma\subseteq\Delta$).
We additionally assume that $\Sigma'\subseteq\Sigma$; we can do this as $|\Sigma'|\leq|\Sigma|$ by Lemma \ref{lem:unfoldableretractsMonoids}.

The following relies on \cite[Lemma 1]{Halava2001marked}, which we strengthen by replacing the notion of ``equivalence'' with that of ``strong equivalence'': Two instances $I_1$ and $I_2$ of the \PCP{} are \emph{strongly equivalent} if their equalisers are isomorphic, which we write as $\eq(I_1)\cong\eq(I_2)$.

\begin{lemma}
\label{lem:MarkedLemma1}
Let $I'=(\Sigma', \Delta', g', h')$ be the reduction of $I=(\Sigma, \Delta, {g}, {h})$ where ${g}$ and ${h}$ are marked. Then $I$ and $I'$ are strongly equivalent, and ${g}'(\eq(I'))=\eq(I)={h}'(\eq(I'))$.
\end{lemma}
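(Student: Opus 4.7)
The plan is to establish $g'(\eq(I')) = \eq(I) = h'(\eq(I'))$ first; strong equivalence then drops out because $g'$ is marked by Lemma \ref{lem:unfoldableretractsMonoids} and hence injective by Lemma \ref{lem:MarkedInj}, so $g'|_{\eq(I')}\colon \eq(I') \to \eq(I)$ is an injective monoid homomorphism which is surjective by the set-equality, hence a monoid isomorphism. The containment $g'(\eq(I')) \subseteq \eq(I)$ follows from $gg' = hh' = k$: for $x \in \eq(I')$ and $z := g'(x) = h'(x)$ one has $g(z) = k(x) = h(z)$, so $z \in \eq(I)$; the $h'$-version is symmetric.

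For the reverse containment, given $y \in \eq(I)$ I construct $x \in \eq(I')$ with $g'(x) = y = h'(x)$. Write $y = y_1 \cdots y_N$ with $y_i \in \Sigma$, set $w := g(y) = h(y)$, and define the common cut set
\[
C := \{\,|g(y_1 \cdots y_i)| : 0 \leq i \leq N\,\} \cap \{\,|h(y_1 \cdots y_j)| : 0 \leq j \leq N\,\},
\]
listing its elements as $0 = c_0 < c_1 < \cdots < c_K = |w|$. Between consecutive cuts, $w[c_{k-1}+1:c_k] = g(u^{(k)}) = h(v^{(k)})$ for factors $u^{(k)}, v^{(k)}$ of $y$ arising from the $g$- and $h$-parses respectively. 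I claim that $(u^{(k)}, v^{(k)})$ is precisely the $a$-block where $a$ is the first letter of that segment; granting this, the word $x := a_{i_1} \cdots a_{i_K} \in (\Sigma')^*$ (with $a_{i_k}$ the symbol for the $k$th block) satisfies $g'(x) = u^{(1)} \cdots u^{(K)} = y$ and $h'(x) = v^{(1)} \cdots v^{(K)} = y$, so $x \in \eq(I')$ maps to $y$ under both $g'$ and $h'$.

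The main obstacle is this block-identification claim. My tactic, in the spirit of \cite[Lemma 1]{Halava2001marked}, is to exploit that the $a$-block is built by a deterministic \emph{forced-letter} procedure: markedness fixes the first letter of $u_a$ as the unique $\alpha$ with $g(\alpha)$ starting with $a$, and likewise for $v_a$; thereafter, each new letter is forced by whichever of $g(u), h(v)$ is currently shorter, again by markedness of the appropriate morphism. Applying this same procedure starting from the common cut $c_{k-1}$ reads off a prefix of $u^{(k)}$ on the $g$-side and of $v^{(k)}$ on the $h$-side (the forced letter at each stage of the construction coincides, by markedness, with the next actual letter of $u^{(k)}$ or $v^{(k)}$), so $u_a$ is a prefix of $u^{(k)}$, $v_a$ is a prefix of $v^{(k)}$, and position $c_{k-1} + |g(u_a)|$ is a common $g$- and $h$-cut of $w$; consecutiveness of the $c_i$ in $C$ then forces $|g(u_a)| = c_k - c_{k-1}$, i.e.\ the segment is exactly the $a$-block. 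This is the delicate combinatorial step that upgrades Halava--Hirvensalo--de Wolf's equivalence of PCP instances to the explicit parameterisation $g'(\eq(I')) = \eq(I)$ asserted here.
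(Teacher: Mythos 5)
Your proposal is correct and follows essentially the same route as the paper: establish the two containments $g'(\eq(I'))\subseteq\eq(I)$ and $g'(\eq(I'))\supseteq\eq(I)$, then use injectivity of the marked morphism $g'$ (Lemmas \ref{lem:unfoldableretractsMonoids} and \ref{lem:MarkedInj}) to upgrade the set equality to an isomorphism, with the $h'$ statement by symmetry. The only difference is that the paper delegates both containments to \cite[Lemma 1]{Halava2001marked}, whereas you reprove them, and your common-cut/block-identification argument for the reverse containment is a sound reconstruction of that cited step.
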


\begin{proof}
Firstly, note that ${g}'(\eq(I'))\leq\eq(I)$ \cite[Lemma 1, paragraph 2]{Halava2001marked}. From \cite[Lemma 1, paragraph 1]{Halava2001marked} it follows that ${g}'(\eq(I'))\geq\eq(I)$, so ${g}'(\eq(I'))=\eq(I)$ . As ${g}'$ is injective, the map ${g}'|_{\eq(I')}$ is an isomorphism. Hence, $I$ and $I'$ are strongly equivalent, and, by symmetry for the $h'$ map, ${g}'(\eq(I'))=\eq(I)={h}'(\eq(I'))$ as required.
\end{proof}

We can now improve the existing result on the marked \PCP{}. We store a morphism $f:\Sigma^*\rightarrow \Delta^*$ as a list $(f(a))_{a\in \Sigma}$.

\begin{theorem}
\label{thm:Tweaking}
If $I=(\Sigma, \Delta, g, h)$ is an instance of the marked \PCP{} then there exists an alphabet $\Sigma_{g, h}$ and a marked morphism ${\psi}_{g, h}: \Sigma_{g, h}^*\rightarrow \Sigma^*$ such that $\im(\psi_{g, h})=\eq(I)$. Moreover, there exists an algorithm with input $I$ and output the marked morphism ${\psi}_{g, h}$.
\end{theorem}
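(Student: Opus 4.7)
The plan is to iterate the reduction procedure of Lemma \ref{lem:MarkedLemma1}. Starting from $I_0:=I$, I would set $I_{n+1}:=I_n'$ to be the reduction of $I_n$, obtaining at each step a marked morphism $g_n':\Sigma_{n+1}^*\rightarrow\Sigma_n^*$ (by Lemma \ref{lem:unfoldableretractsMonoids}) satisfying $g_n'(\eq(I_{n+1}))=\eq(I_n)$ (by Lemma \ref{lem:MarkedLemma1}). I aim to show that the process terminates in one of two configurations: either (T1) $g_n=h_n$, so that $\eq(I_n)=\Sigma_n^*$, or (T2) the instance admits no blocks (equivalently $\Sigma_{n+1}=\emptyset$), so that $\eq(I_n)=\{\epsilon\}$.

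Granted termination after $N$ steps, the construction of $\psi_{g,h}$ is immediate. In case (T1) take $\Sigma_{g,h}:=\Sigma_N$ and $\psi_{g,h}:=g_0'\circ g_1'\circ\cdots\circ g_{N-1}':\Sigma_{g,h}^*\rightarrow\Sigma^*$; an iterated application of Lemma \ref{lem:MarkedLemma1} gives $\im(\psi_{g,h})=\eq(I_0)=\eq(I)$, and $\psi_{g,h}$ is marked since each factor is marked and the composition of marked morphisms is marked. In case (T2) take $\Sigma_{g,h}=\emptyset$ with $\psi_{g,h}$ the empty morphism, so that $\im(\psi_{g,h})=\{\epsilon\}=\eq(I)$. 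Computability then reduces to the observations that $I_{n+1}$ is computable from $I_n$ by Lemma \ref{lem:unfoldableretractsMonoids}, and that each terminal condition can be checked by inspection of the images $g_n(a),h_n(a)$.

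The crux, and the main obstacle, is proving that the reduction sequence $I_0,I_1,I_2,\ldots$ actually reaches a terminal state in finitely many steps. This is essentially the heart of the Halava--Hirvensalo--de Wolf decidability proof of the marked \PCP{} in \cite{Halava2001marked}, which I would invoke. I expect the argument to proceed by bounding the complexity of $I_n$ (using the constraint $\Sigma_{n+1}\subseteq\Sigma_n\subseteq\Sigma$ together with appropriate bounds on the image lengths) so that only finitely many non-equivalent instances can arise along the sequence; a repeated instance, combined with the minimality built into the definition of blocks, then forces either $g_n=h_n$ or $\eq(I_n)=\{\epsilon\}$, giving termination. Tracking the compositions $g_0'\circ\cdots\circ g_{n-1}'$ throughout this process then yields the desired marked morphism $\psi_{g,h}$ in effectively computable form.
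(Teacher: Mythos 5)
Your overall scaffolding (iterate the reduction, compose the marked maps $g_n'$, use Lemma \ref{lem:MarkedLemma1} to transport the equaliser back to $\Sigma^*$) matches the paper, but your termination dichotomy is false, and this is where the real content of the theorem lives. You claim the sequence must reach an instance with either (T1) $g_n=h_n$, whence $\eq(I_n)=\Sigma_n^*$, or (T2) no blocks, whence $\eq(I_n)=\{\epsilon\}$. Consider $\Sigma=\Delta=\{a,b\}$ with $g=\mathrm{id}$ and $h(a)=a$, $h(b)=b^2$; both are marked, and $\eq(g,h)=\{a\}^*$, which is neither all of $\Sigma^*$ nor trivial. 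The blocks are $(a,a)$ and $(b^2,b)$, so the reduction swaps the roles of $g$ and $h$ on the letter $b$, and the second reduction returns the original instance: the sequence cycles forever through instances with $g_n\neq h_n$ and nontrivial equaliser, never reaching (T1) or (T2). So a repeated instance does \emph{not} force your two terminal configurations, and "minimality of blocks" does not rescue this.

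What is actually needed (and what the paper proves) is a third, genuinely harder case: when the sequence cycles, say $I_j\rightarrow\cdots\rightarrow I_{j+i}=I_j$, the composite $\overline{g}=g_{j+1}\cdots g_{j+i}$ restricts to an automorphism of $\eq(I_j)$; since each $g_k$ is an immersion-like marked map one gets $|\overline{g}(w)|\leq|w|$ on the equaliser, and a counting argument on words of each fixed length shows $\overline{g}$ is length-preserving there, hence sends letters to letters. From this one concludes that $\eq(I_j)$ is freely generated by the \emph{subset} of letters $a\in\Sigma_j$ with $g_j(a)=h_j(a)$ --- a possibly proper, possibly nonempty subset, which your two cases cannot produce. (The paper's other terminal cases are also different from yours: $|\Sigma_j|=1$, or all images of length $1$; in the latter the basis is again a subset of $\Sigma_j$, not necessarily $\emptyset$ or $\Sigma_j$.) Your construction of $\psi_{g,h}$ as a restriction of the composite $g_0'\circ\cdots\circ g_{N-1}'$ to the subalphabet forming the basis would then go through, but as written the proof has a genuine gap at the cycling case.
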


\begin{proof}
We explain the algorithm, and note at the end that the output is a marked morphism ${\psi}_{g, h}: F(\Sigma_{g, h})\rightarrow F(\Sigma)$ with the required properties, and so the result follows. 

Begin by making reductions $I_0, I_1, I_2, \ldots$, starting with $I_0=I=(\Sigma, \Delta, {g}, {h})$, the input instance. Then by \cite[Section 5, paragraph 1]{Halava2001marked} we will obtain an instance $I_j=(\Sigma_j, \Delta, {g}_j, {h}_j)$ such that one of the following will occur:
\begin{enumerate}
\item\label{Tweaking:1} $|\Sigma_j|=1$.
\item\label{Tweaking:2} $|{g}_j(a)|=1=|{h}_j(a)|$ for all $a\in\Sigma_j$.
\item\label{Tweaking:3} There exists some $i<j$ with $I_i=I_j$ (sequence starts cycling).
\end{enumerate}
Keeping in mind the fact that reductions preserve equalisers (Lemma \ref{lem:MarkedLemma1}), we obtain in each case a subset $\Sigma_{g, h}$ (possibly empty) which forms a basis for $\eq(I_j)$: 
For Case (\ref{Tweaking:1}), writing $\Sigma_j=\{a\}$, the result holds as if $g(a^i)=h(a^i)$ then $g(a)^i=h(a)^i$ and so $g(a)=h(a)$ as roots are unique in a free monoid.
For Case (\ref{Tweaking:2}), suppose ${g}_j(x)={h}_j(x)$. Then ${g_j}$ and ${h_j}$ agree on the first letter of $x \in \Sigma_j^*$ because the image of each letter has length one, and inductively we see that they agree on every letter of $x$. Hence, a subset $\Sigma_{g, h}$ of $\Sigma_j$ forms a basis for $\eq(I_j)$.

For Case (\ref{Tweaking:3}), suppose there is a sequence of reductions beginning and ending at $I_j$:
\[
I_j\rightarrow I_{j+1}\rightarrow\cdots\rightarrow I_{j+(i-1)}\rightarrow I_{j+i}=I_j
\]
and write $r:=j+i$.
By Lemma \ref{lem:MarkedLemma1}, $\eq(I_j)=g_{j+1}g_{j+2}\ldots g_r(\eq(I_r))=\eq(I_r)$; thus $\overline{g_r}:=g_{j+1}g_{j+2}\ldots g_r$ restricts to an automorphism of $\eq(I_j)$, so $\overline{g_r}|_{\eq(I_j)}\in\aut(\eq(I_j))$.
The automorphism $\overline{g_r}$ is necessarily length-preserving ($|\overline{g_r}(w)|=|w|$ for all $w\in \eq(I_j)$). Consider $x\in\eq(I_j)=\eq(I_r)$. Then $\overline{g_r}$ maps the letters occurring in $x_r$ to letters and so $g_j(=g_r)$ and $h_j(=h_r)$ map the letters occuring in $x$ to letters, and it follows that every letter occuring in $x$ is a solution to $I_r=I_j$. Hence, a subset $\Sigma_{g, h}$ of $\Sigma_j$ forms a basis for $\eq(I_j)$ as required.

Therefore, in all three cases a subset $\Sigma_{g, h}$ of $\Sigma_j$ forms a basis for $\eq(I_j)$, and since $\Sigma_j$ is computable, this basis is as well.
In order to prove the theorem, it is sufficient to prove that there is a computable immersion $\psi_{g, h}:\Sigma_{g, h}^*\rightarrow\Sigma^*$. Consider the map $\tilde{g}=g_1g_2\cdots g_j: \Sigma_j^*\rightarrow\Sigma^*$ (and the analogous $\tilde{h}$). Now, each $g_i$ is marked, by Lemma \ref{lem:unfoldableretractsMonoids}, and so $\tilde{g}$ is the composition of marked morphisms and hence is marked itself.
Define $\psi_{g, h}:=\tilde{g}|_{\Sigma_{g, h}^*}$. This map is computable from $\tilde{g}$, and as $\Sigma_{g, h}\subseteq \Sigma_j$, the map $\psi_{g, h}$ is marked. As $\im(\psi_{g, h})=g_{1}g_{2}\ldots g_j(\eq(I_j))=\eq(I)$, by Lemma \ref{lem:MarkedLemma1} and the above, the result follows.
\end{proof}

Theorem \ref{thm:Tweaking} combines with the following general result to give the non-algorithmic part of Theorem \ref{thm:mainMonoids}. A {subsemigroup} $M$ of a free monoid $\Sigma^*$ is \emph{marked} if it is the image of a marked morphism.

\begin{lemma}
\label{lem:mainMonoidsINFINITE}
If $\{M_j\}_{j\in J}$ is a set of marked subsemigroups of $\Sigma^*$ then the intersection $\bigcap_{j\in J}M_j$ is marked.
\end{lemma}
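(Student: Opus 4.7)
The plan is to exhibit a marked basis $B$ for $N := \bigcap_{j \in J} M_j$ directly. For each $j$, fix a marked morphism $f_j: \Gamma_j^* \to \Sigma^*$ with image $M_j$, and let $B_j = f_j(\Gamma_j)$; by Lemma \ref{lem:MarkedInj} every $w \in M_j$ has a unique $B_j$-factorization, and the marked property lets us read it off greedily from the left, since the first letter of any remaining suffix determines the next $B_j$-factor. The key structural observation is that if $u, w \in M_j$ with $u$ a non-empty prefix of $w$, then their $B_j$-factorizations must coincide from the left, so $u^{-1}w \in M_j \cup \{\varepsilon\}$. Intersecting over $j$, the same inheritance holds in $N$: if $u, w \in N$ with $u$ a non-empty prefix of $w$, then $u^{-1}w \in N \cup \{\varepsilon\}$.

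Define $B := \{w \in N \setminus \{\varepsilon\} : w \text{ has no proper non-empty prefix in } N\}$. Induction on length shows $B$ generates $N$: for $w \in N \setminus \{\varepsilon\}$, take the shortest non-empty prefix $u$ of $w$ in $N$; minimality forces $u \in B$, while prefix-cancellation gives $u^{-1}w \in N$, of smaller length. For marked-ness of $B$, suppose $b_1, b_2 \in B$ share a first letter with $b_1 \ne b_2$. If one is a prefix of the other, the shorter element is a proper non-empty prefix in $N$ of the longer, contradicting the definition of $B$. Otherwise, let $p$ be the longest common prefix of $b_1$ and $b_2$; I claim $p \in N$, which again contradicts $b_1 \in B$. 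For each $j$, write $b_1 = d_j^{(1)} \cdots d_j^{(n_j)}$ and $b_2 = e_j^{(1)} \cdots e_j^{(m_j)}$ with factors in $B_j$. Since $b_1, b_2$ share a first letter, marked-ness of $B_j$ forces $d_j^{(1)} = e_j^{(1)}$; let $s_j \ge 1$ be maximal with $d_j^{(i)} = e_j^{(i)}$ for all $i \le s_j$. As neither $b_i$ is a prefix of the other, both $d_j^{(s_j+1)}$ and $e_j^{(s_j+1)}$ exist and, being distinct elements of $B_j$, start with different letters. Hence $p = d_j^{(1)} \cdots d_j^{(s_j)} \in M_j$, and as this holds for every $j$, $p \in N$.

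Finally, since $|B| \le |\Sigma|$ by marked-ness, choose a finite alphabet $\Gamma$ in bijection with $B$ and define $\psi: \Gamma^* \to \Sigma^*$ sending each letter to the corresponding element of $B$. Then $\psi$ is a marked morphism whose image is the submonoid generated by $B$, namely $N$, completing the proof. The main obstacle is marked-ness of $B$: the non-prefix case requires invoking the marked property of each $B_j$ twice—once to align the common initial $B_j$-factors and once to force divergence at distinct letters—in order to pin down the longest common prefix as a $B_j$-product, which is what promotes it from being a common prefix of $b_1, b_2$ to an element of every $M_j$.
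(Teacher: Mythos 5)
Your proposal is correct and follows essentially the same route as the paper: the heart of both arguments is that, by aligning the marked factorizations, the longest common prefix of two elements of $M_j$ sharing a first letter is itself a product of $B_j$-factors and hence lies in $M_j$, and therefore in the intersection. You simply carry out explicitly the steps the paper leaves implicit, namely prefix-cancellation, the construction of the basis $B$ of prefix-minimal elements, and the verification that $B$ is marked and generates the intersection.
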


\begin{proof}
Firstly, suppose $x, y\in M_j$ for some $j\in J$. Then there exist two words $x_0\dots x_l$ and $y_0\dots y_k$, with $x_i, y_i \in \Sigma$, such that $\phi(x_0\dots x_l)=x$ and $\phi(y_0\dots y_k)=y$, where $\phi$ is a marked morphism. If $x$ and $y$ have a nontrivial common prefix, then because $\phi$ is marked we get $x_0=y_0$, and $\phi(x_0)$ is a prefix of both $x$ and $y$, and in particular $\phi(x_0) \in M_j$. By continuing this argument, if $z$ is a maximal common prefix of $x$ and $y$, then $z\in M_j$.

Now, suppose $x, y\in\bigcap_{j\in J}M_j$, and suppose they both begin with some letter $a\in \Sigma\cup\Sigma^{-1}$. By the above, their maximal common prefix $z_a$ is contained in each $M_j$ and so is contained in $\bigcap_{j\in J}M_j$. Therefore, $z_a$ is a prefix of every element of $\bigcap_{j\in J}M_j$ beginning with an $a$. It follows that $\bigcap_{j\in J}M_j$ is immersed, as required.
\end{proof}

We now prove the algorithmic part of Theorem \ref{thm:mainMonoids} (this is independent of Lemma \ref{lem:mainMonoidsINFINITE}).

\begin{lemma}
\label{lem:mainMonoidsFINITE}
There exists an algorithm with input a finite set of marked morphisms $S$ from $\Sigma^*$ to $\Delta^*$ and output a marked morphism $\psi_S:\Sigma_S^*\rightarrow \Sigma^*$ such that $\im(\psi_S)=\eq(S)$.
\end{lemma}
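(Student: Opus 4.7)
The plan is to reduce to the two-map case already handled by Theorem \ref{thm:Tweaking} and proceed by induction on $|S|$. The base cases $|S|\leq 1$ are trivial: take $\Sigma_S=\Sigma$ and $\psi_S$ the identity, whose image $\Sigma^*$ equals $\eq(S)$. The case $|S|=2$ is exactly Theorem \ref{thm:Tweaking}.

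For the inductive step, suppose $S=\{f_1,\dots,f_n\}$ with $n\geq 3$. First I would apply Theorem \ref{thm:Tweaking} to the pair $(f_1,f_2)$ to compute a marked morphism $\psi_{1,2}:\Sigma_{1,2}^*\rightarrow\Sigma^*$ with $\im(\psi_{1,2})=\eq(f_1,f_2)$. Next, form the set of $n-1$ morphisms
\[
S':=\{f_1\circ\psi_{1,2},\; f_3\circ\psi_{1,2},\; \ldots,\; f_n\circ\psi_{1,2}\}
\]
from $\Sigma_{1,2}^*$ to $\Delta^*$; each of these is marked because, as observed in the proof of Lemma \ref{lem:unfoldableretractsMonoids}, a composition of marked morphisms is marked. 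The inductive hypothesis then supplies a computable marked morphism $\psi':\Sigma_S^*\rightarrow\Sigma_{1,2}^*$ with $\im(\psi')=\eq(S')$, and we set $\psi_S:=\psi_{1,2}\circ\psi'$, which is once more marked as a composition of marked morphisms.

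The verification that $\im(\psi_S)=\eq(S)$ is the crux. For $\im(\psi_S)\subseteq\eq(S)$: any $x=\psi_{1,2}(\psi'(y))$ lies in $\im(\psi_{1,2})=\eq(f_1,f_2)$, so $f_1(x)=f_2(x)$, while for $i\geq 3$ the condition $\psi'(y)\in\eq(S')$ gives $(f_i\circ\psi_{1,2})(\psi'(y))=(f_1\circ\psi_{1,2})(\psi'(y))$, hence $f_i(x)=f_1(x)$. Conversely, if $x\in\eq(S)$, then $x\in\eq(f_1,f_2)=\im(\psi_{1,2})$, and injectivity of $\psi_{1,2}$ (Lemma \ref{lem:MarkedInj}) yields a unique $y\in\Sigma_{1,2}^*$ with $\psi_{1,2}(y)=x$. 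Applying the defining equations $f_1(x)=f_i(x)$ for $i\in\{3,\dots,n\}$ shows $y\in\eq(S')=\im(\psi')$, so $x\in\im(\psi_{1,2}\circ\psi')=\im(\psi_S)$.

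No step looks to be a serious obstacle: all the real work is packaged inside Theorem \ref{thm:Tweaking}, and the inductive argument turns only on the two facts that marked morphisms are injective and closed under composition. The algorithm is effective throughout, since computing $\psi_{1,2}$, forming the compositions $f_i\circ\psi_{1,2}$, and invoking the recursive call are each effective.
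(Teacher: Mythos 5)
Your proof is correct, and it shares the paper's skeleton (induction on $|S|$, with Theorem \ref{thm:Tweaking} as the base case), but the inductive step is genuinely different. The paper splits $S$ into $S_g=S\setminus\{g\}$ and a pair $\{g,h\}$ with $h\in S_g$, obtains by induction marked morphisms $\psi_{S_g}$ and $\psi_{g,h}$ whose images are $\eq(S_g)$ and $\eq(g,h)$, and then applies the block construction of Lemma \ref{lem:unfoldableretractsMonoids} to these two morphisms to produce a marked morphism whose image is $\im(\psi_{S_g})\cap\im(\psi_{g,h})=\eq(S)$; the combining mechanism is thus an \emph{intersection of images}. You instead pull the remaining maps back along $\psi_{1,2}$, recurse on the smaller set $S'$ of composites over the alphabet $\Sigma_{1,2}$, and compose the two outputs; the combining mechanism is a \emph{pullback/composition}. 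Your route needs only that marked morphisms are injective and closed under composition, and avoids invoking the fact (implicit in the paper's use of Lemma \ref{lem:unfoldableretractsMonoids}) that the block construction computes $\im(g)\cap\im(h)$ for marked $g,h$. What the paper's version buys in exchange is that it computes the pairwise equalisers independently and intersects them, which is the structure its space analysis in Section \ref{sec:Complexity} relies on; with your nested recursion that analysis would need minor reworking. Two harmless caveats in yours: the inductive hypothesis must be read as uniform in the source alphabet, since you apply it to morphisms out of $\Sigma_{1,2}^*$ rather than $\Sigma^*$ (the algorithm of Theorem \ref{thm:Tweaking} is indeed uniform in the alphabets), and your set $S'$ rightly omits $f_2\circ\psi_{1,2}$ because it coincides with $f_1\circ\psi_{1,2}$ on all of $\Sigma_{1,2}^*$. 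Both verifications of $\im(\psi_S)=\eq(S)$ are sound.
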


\begin{proof}
We use induction on $|S|$.
By Theorem \ref{thm:Tweaking}, the result holds if $|S|=2$.
Suppose the result holds for all sets of $n$ marked morphisms, $n\geq2$, and let $S$ be a set of $n+1$ marked morphisms.
Take elements $g, h\in S$, and write $S_g=S\setminus\{g\}$. By hypothesis, we can algorithmically obtain marked morphisms $\psi_{S_g}:\Sigma_{S_g}^*\rightarrow \Sigma^*$ and $\psi_{g, h}:\Sigma_{g, h}^*\rightarrow \Sigma^*$ such that $\im(\psi_{S_g})=\eq(S_g)$ and $\im(\psi_{g, h})=\eq(g, h)$.

By Lemma \ref{lem:unfoldableretractsMonoids}, there exists a (computable) marked morphism $\psi_S: \Sigma_S^*\rightarrow \Sigma^*$ such that $\im(\psi_S)=\im(\psi_{S_g})\cap\im(\psi_{g, h})$ (the map $\psi_S$ corresponds to the map $k$ in Lemma \ref{lem:unfoldableretractsMonoids}, and $\Sigma_S$ to $\Sigma'$). Then, as required:
$ \im(\psi_S) =\im(\psi_{S_g})\cap\im(\psi_{g, h}) =\eq(S_g)\cap\eq(g, h)=\eq(S).$
\end{proof}

We now prove Theorem \ref{thm:mainMonoids}, which states that the equaliser is the image of a marked map.
\begin{proof}[Proof of Theorem \ref{thm:mainMonoids}]
By applying Lemma \ref{lem:mainMonoidsINFINITE} to Theorem \ref{thm:Tweaking}, there exists an alphabet $\Sigma_S$ and a marked morphism $\psi_S:\Sigma_S^*\rightarrow \Sigma^*$ such that $\im(\psi_S)=\eq(S)$, while by Lemma \ref{lem:mainMonoidsFINITE} if $S$ is finite then such a marked morphism can be algorithmically found.
\end{proof}

We now prove Corollary \ref{corol:rankMonoid}, which says that $\eq(S)$ is free of rank $\leq|\Sigma|$.

\begin{proof}[Proof of Corollary \ref{corol:rankMonoid}]
Consider the marked morphism $\psi_S:\Sigma_S^*\rightarrow\Sigma^*$ given by Theorem \ref{thm:mainMonoids}. By Lemma \ref{lem:MarkedInj}, $\psi_S$ is injective so $\im(\psi_S)$ is free. As $\psi_S$ is marked the map $\Sigma_S\rightarrow\Sigma$ taking each $a\in \Sigma_S$ to the initial letter of $\psi_S(a)$ is an injection, so $|\Sigma_S|\leq|\Sigma|$ as required.
\end{proof}

We now prove a strong form of the \EPM{} for marked morphisms.

\begin{corollary}
\label{corol:basisMonoidSETS}
There exists an algorithm with input a finite set $S$ of marked morphisms from $\Sigma^*$ to $\Delta^*$  and output a basis for $\eq(S)$.
\end{corollary}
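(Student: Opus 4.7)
The plan is to derive this as an almost immediate consequence of Theorem \ref{thm:mainMonoids} (the finite, algorithmic half), combined with the observation that the image of a marked morphism is free on the images of the generators.

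First I would invoke Lemma \ref{lem:mainMonoidsFINITE} (equivalently, the algorithmic half of Theorem \ref{thm:mainMonoids}) on the finite set $S$ to algorithmically produce a marked morphism $\psi_S: \Sigma_S^* \to \Sigma^*$ together with the finite alphabet $\Sigma_S$, such that $\im(\psi_S) = \eq(S)$. Since a morphism is stored as the list $(\psi_S(a))_{a \in \Sigma_S}$, the finite set $B := \{\psi_S(a) \mid a \in \Sigma_S\}$ is read off directly from the output.

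Next I would argue $B$ is a basis for $\eq(S)$. By Lemma \ref{lem:MarkedInj}, the marked morphism $\psi_S$ is injective, so $\im(\psi_S)$ is isomorphic to $\Sigma_S^*$ via $\psi_S$; in particular $\im(\psi_S) = \eq(S)$ is a free submonoid of $\Sigma^*$ freely generated by $\psi_S(\Sigma_S) = B$. Hence outputting $B$ solves the problem.

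The algorithm is therefore: run the procedure of Lemma \ref{lem:mainMonoidsFINITE} on $S$ to get $\psi_S$, and return the finite set $\{\psi_S(a) \mid a \in \Sigma_S\}$. There is no real obstacle here — all the work has already been done in Theorem \ref{thm:mainMonoids} and Lemma \ref{lem:MarkedInj}; the only subtlety worth a sentence is that freeness of $\eq(S)$ on $B$ follows from injectivity of $\psi_S$, i.e.\ from the marked hypothesis, which is why we get a genuine basis rather than merely a finite generating set.
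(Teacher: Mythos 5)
Your proposal is correct and follows essentially the same route as the paper: run the algorithm of Theorem \ref{thm:mainMonoids} (equivalently Lemma \ref{lem:mainMonoidsFINITE}) to get the marked morphism $\psi_S$ and output the stored list of generator images as the basis. The only difference is that you explicitly justify freeness of $\eq(S)$ on this set via injectivity of $\psi_S$ (Lemma \ref{lem:MarkedInj}), a point the paper leaves implicit here (it appears in the proof of Corollary \ref{corol:rankMonoid}); this is a reasonable small addition, not a different argument.
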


\begin{proof}
To algorithmically obtain a basis for $\eq\left({S}\right)$, first use the algorithm of Theorem \ref{thm:mainMonoids} to obtain the marked morphism $\overline{\psi}_{{S}}: \Sigma_{{S}}^*\rightarrow \Sigma^*$ such that $\im(\psi_S)=\eq(S)$. Then, recalling that we store ${\psi}_{{S}}$ as a list $({\psi}_{{S}}(a))_{a\in \Sigma}$, the required basis is the set of elements in this list, so the set $\{{\psi}_{{S}}(a)\}_{a\in \Sigma}$.
\end{proof}

Corollary \ref{corol:basisMonoid}, the \EPM{} for marked morphisms, follows from Corollary \ref{corol:basisMonoidSETS} by taking $|S|=2$, while Corollary \ref{corol:SPCPMonoid}, the simultaneous \PCP{}, also follows as $\eq\left({S}\right)$ is trivial if and only if its basis is empty.

\section{Immersions of free groups}
\label{sec:immersions}

We denote the free group with finite generating set $\Sigma$ by $F(\Sigma)$, and view it as the set of all \emph{freely reduced words} over $\Sigma^{\pm1}=\Sigma \cup \Sigma^{-1}$, that is, words not containing $xx^{-1}$ as subwords, $x\in \Sigma^{\pm1}$, together with the operations of concatenation and free reduction (that is, the removal of any $xx^{-1}$ that might occur when concatenating two words).

We now begin our study of immersions of free groups, as defined in the introduction. 
We first state the characterising lemma, then explain the terms involved before giving the proof.

\begin{lemma}\label{lem:immersionDef}
Let $g: F(\Sigma)\rightarrow F(\Delta)$ be a free group morphism. The following are equivalent.
\begin{enumerate}
\item\label{list:marked} The map $g$ is an immersion of free groups.
\item\label{list:unfoldable} Every word in the language $L(\Gamma_g, v_g)$ is freely reduced.
\item\label{list:kapovichImmersion} For all $x, y\in \Sigma\cup\Sigma^{-1}$ such that $xy\neq1$, the length identity $|g(xy)|=|g(x)|+|g(y)|$ holds.
\end{enumerate}
\end{lemma}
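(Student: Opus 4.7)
The plan is to establish $(1) \Leftrightarrow (3)$ first as a purely word-combinatorial statement, and then use the description of $\Gamma_g$ (the wedge of loops at $v_g$, one for each $x \in \Sigma$, with each loop subdivided and labeled by the letters of $g(x) \in F(\Delta)$) to bridge to $(2)$. I would prove everything via an equivalent negated form: $g$ is not an immersion iff there is cancellation in some $g(x)g(y)$ with $xy \neq 1$ iff some path label in $\Gamma_g$ fails to be freely reduced.

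For $(1) \Leftrightarrow (3)$, I would use the following observation. For $x, y \in \Sigma \cup \Sigma^{-1}$ with $xy \neq 1$ (equivalently $x \neq y^{-1}$), the reduced product $g(x)g(y)$ has strictly smaller length than $|g(x)| + |g(y)|$ if and only if the last letter of $g(x)$ equals the inverse of the first letter of $g(y)$. Since $g(x^{-1}) = g(x)^{-1}$, the last letter of $g(x)$ is the inverse of the first letter of $g(x^{-1})$. Hence cancellation in $g(x)g(y)$ occurs iff the first letter of $g(x^{-1})$ equals the first letter of $g(y)$; and the hypothesis $x \neq y^{-1}$ translates to $x^{-1} \neq y$, so this last equality is precisely the failure of the marked condition on the pair $(x^{-1}, y)$. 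Running this equivalence in both directions gives $(1) \Leftrightarrow (3)$.

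For $(3) \Leftrightarrow (2)$, I would unwind the construction of $\Gamma_g$: a path starting at $v_g$ corresponds to a (not necessarily reduced) word $w = x_1 x_2 \cdots x_n$ over $\Sigma \cup \Sigma^{-1}$, and the label read along the path is the concatenation $g(x_1) g(x_2) \cdots g(x_n)$ as a word (before any free reduction). This concatenated word is freely reduced in $F(\Delta)$ iff there is no cancellation at any junction, i.e.\ iff $|g(x_i x_{i+1})| = |g(x_i)| + |g(x_{i+1})|$ whenever $x_i x_{i+1} \neq 1$, for every consecutive pair. So $(3)$ immediately yields that every label is freely reduced, and conversely if $(3)$ fails for some pair $(x, y)$ then the length-two path labeled $xy$ at $v_g$ exhibits a non-reduced word in $L(\Gamma_g, v_g)$.

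The main subtlety (rather than obstacle) will be handling path labels cleanly when the underlying word over $\Sigma \cup \Sigma^{-1}$ itself contains a backtrack $x_i x_{i+1} = 1$: here the concatenated label is trivially non-reduced, but such pairs are correctly excluded by the hypothesis $xy \neq 1$ in $(3)$, and the interpretation of $L(\Gamma_g, v_g)$ in Section \ref{sec:immersions} will surely restrict attention to paths that avoid immediate edge backtracks (so that the two notions of ``reduced'' match up). Once this convention is fixed, the argument above is essentially bookkeeping.
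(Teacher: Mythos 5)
Your proposal is correct and takes essentially the same approach as the paper: both hinge on the observation that cancellation in $g(x)g(y)$ --- equivalently, a non-reduced label at the junction of two petals at $v_g$ --- occurs exactly when $g(x^{-1})$ and $g(y)$ share a first letter, i.e.\ when markedness fails, and your resolution of the backtracking convention agrees with the paper's definition of $L(\Gamma_g, v_g)$ via reduced paths. The only (cosmetic) difference is that you chain $(1)\Leftrightarrow(3)\Leftrightarrow(2)$ whereas the paper links $(1)$ directly to each of $(2)$ and $(3)$.
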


Characterisation (\ref{list:kapovichImmersion}) is the established definition of Kapovich \cite{Kapovich2000Mapping}. Characterisation (\ref{list:unfoldable}) is the one we shall work with in this article. It uses ``Stallings graphs'', which are essentially finite state automata that recognise the elements of finitely generated subgroups of free groups. We define these now, and refer the reader to \cite{Kapovich2002Stallings} for background on Stallings graphs. 

The \emph{(unfolded) Stallings graph} $\Gamma_g$ of the free group morphism $g$ is the directed graph formed by taking a bouquet with $|\Sigma|$ petals attached at a central vertex we call $v_g$, where each petal consists of a path labeled by $g(x) \in (\Delta\cup \Delta^{-1})^*$;  the elements of $\Delta^{-1}$ occur as edges traversed backwards and we denote by $e^{-1}$ the edge $e$ in opposite direction, and by $E\Gamma_g^{\pm1}$ the sets of edges in both directions. A {path} $q=(e_1, \ldots, e_n)$, $e_i\in E\Gamma_g^{\pm1}$ edges, is \emph{reduced} if it has no backtracking, that is, $e_i^{-1}\neq e_{i+1}$ for all $1\leq i<n$. We denote by $\iota(p)$ the initial vertex of a path $p$ and $\tau(p)$ for the terminal vertex, and call a reduced path $p$ with $\iota(p)=u=\tau(p)$ a \emph{closed reduced circuit}.

We shall view $\Gamma_g$ as a finite state automaton $(\Gamma_g, v_g)$ with start and accept states both equal to $v_g$. Then the \emph{extended language accepted by $(\Gamma_g, v_g)$} is the set of words labelling reduced closed circuits at $v_g$ in $\Gamma_g$:
\[
L(\Gamma_g, v_g)=\{label(p)\mid \text{$p$ is a reduced path with $\iota(p)=u=\tau(p)$}\}.
\]
Immersions are precisely those maps $g$ such that every element of $L(\Gamma_g, v_g)$ is freely reduced; this corresponds to the automaton $(\Gamma_g, v_g)$ and the ``reversed'' automaton $(\Gamma_g, v_g)^{-1}$, where edge directions are reversed, both being deterministic (map $g$ in Figure \ref{fig:notImmersion} is not an immersion; although the automaton $(\Gamma_g, v_g)$ is deterministic, $(\Gamma_g, v_g)^{-1}$ is not). For such maps, $L(\Gamma_g, v_g)$ is precisely the image of the map $g$ \cite[Proposition 3.8]{Kapovich2002Stallings}.
\begin{proof}[Proof of Lemma \ref{lem:immersionDef}]
(\ref{list:marked}) $\Leftrightarrow$ (\ref{list:unfoldable}).
Every element of $L(\Gamma_g, v_g)$ is freely reduced if and only if the graph ${\Gamma}_g$, with base vertex $v_{g}$, is such that for all $e_1, e_2\in (E{\Gamma}_g)^{\pm1}$ such that both edes start at $v_g$ or both edges end at $v_g$, then $e_1$ and $e_2$ have different labels (so $\gamma_g(e_1)\neq\gamma_g(e_2)$). This condition on labels is equivalent to $g(\Sigma\cup\Sigma^{-1})$ being marked, as required.

(\ref{list:marked}) $\Leftrightarrow$ (\ref{list:kapovichImmersion}). Condition (\ref{list:kapovichImmersion}) is equivalent to the condition that for all $x, y\in \Sigma\cup\Sigma^{-1}$ such that $xy\neq1$, free cancellation does not happen between $g(x)$ and $g(y)$, which in turn is equivalent to the condition that for all such $x, y$ the elements $g(x^{-1})$ and $g(y)$ start with different letters of $\Delta\cup\Delta^{-1}$. This is equivalent to $g(\Sigma\cup\Sigma^{-1})$ being marked, as required.
\end{proof}

\begin{example}
Let $g: F(a, b)\rightarrow F(x, y)$ be the map defined by $g(a)=x^{-2}y$ and $g(b)=y^2x$. Then the graph $\Gamma_g$, where the double arrow represents $x$ and the single arrow $y$, is depicted in Figure \ref{fig:notImmersion}. The map $g$ is not an immersion since there are two edges labeled $x$ entering $v_g$ (violating Characterisation (\ref{list:unfoldable})). Similarly, $g(a)$ and $g(b^{-1})$ both start with $x^{-1}$ (violating Characterisation (\ref{list:marked})) and $|g(ba)|=4<6=|g(a)|+|g(b)|$ (violating Characterisation (\ref{list:kapovichImmersion})).
\end{example}
\vspace{-1.1cm}
\begin{figure}[h]
\centering
\begin{tikzpicture}
\begin{scope}[decoration={markings,
mark=at position 10/60 with {\arrow[line width=1pt]{stealth[reversed]}},
mark=at position 12/60 with {\arrow[line width=1pt]{stealth[reversed]}},
mark=at position 31/60 with {\arrow[line width=1pt]{stealth[reversed]}},
mark=at position 33/60 with {\arrow[line width=1pt]{stealth[reversed]}},
mark=at position 5/6 with {\arrow[line width=1pt]{stealth}},
}]
\draw[postaction={decorate}] (0, 0) .. controls (-2, 2) and (-2, -2) .. (0, 0);
\end{scope}
\begin{scope}[decoration={markings,
mark=at position 10/60 with {\arrow[line width=1pt]{stealth}},
mark=at position 30/60 with {\arrow[line width=1pt]{stealth}},
mark=at position 5/6 with {\arrow[line width=1pt]{stealth}},
mark=at position 52/60 with {\arrow[line width=1pt]{stealth}},
}] 
\draw[postaction={decorate}] (0, 0) .. controls (2, 2) and (2, -2) .. (0, 0);
\end{scope}
\node at (-2.2, 0) {$\Gamma_g = $};
\filldraw [black] (1,0.56) circle (2pt);
\filldraw [black] (1,-0.56) circle (2pt);
\filldraw [black] (-1,0.56) circle (2pt);
\filldraw [black] (-1,-0.56) circle (2pt);
\filldraw [black] (0,0) circle (2pt);
\node at (0, 0.5) {$v_g$};
\node at (-1.5, -0.9) {$g(a)$};
\node at (1.5, -0.9) {$g(b)$};
\end{tikzpicture}
\caption{
The graph $\Gamma_g$ for the map $g: F(a, b)\rightarrow F(x, y)$ defined by $g(a)=x^{-2}y$, $g(b)=y^2x^{-1}$.
}\label{fig:notImmersion}
\end{figure}
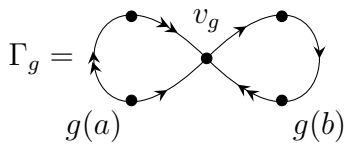

Using Characterisation (\ref{list:unfoldable}), we see that immersions are injective \cite[Proposition 3.8]{Kapovich2002Stallings}:

\begin{lemma}
\label{lem:injective}
If $g: F(\Sigma)\rightarrow F(\Delta)$ is an immersion then it is injective.
\end{lemma}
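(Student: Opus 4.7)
The plan is to apply characterisation (\ref{list:unfoldable}) of Lemma \ref{lem:immersionDef}, which says every word in $L(\Gamma_g, v_g)$ is freely reduced. Take a nontrivial freely reduced word $w = x_1 x_2 \cdots x_n \in F(\Sigma)$ with each $x_i \in \Sigma \cup \Sigma^{-1}$ and $x_i x_{i+1} \neq 1$; the goal is to show $g(w) \neq 1$ in $F(\Delta)$.

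For each $x_i$, the word $g(x_i)$ is the label of a closed circuit $p_i$ based at $v_g$: the petal corresponding to $x_i$ traversed forward if $x_i \in \Sigma$, and backward otherwise. Concatenating yields a closed circuit $p = p_1 p_2 \cdots p_n$ at $v_g$ whose label is $g(x_1) g(x_2) \cdots g(x_n)$. The key claim is that $p$ is reduced. Backtracking within a single $p_i$ is impossible since each petal is itself a reduced path, so any backtracking must occur at the hub vertex $v_g$, at the join between $p_i$ and $p_{i+1}$. Such a join backtracks precisely when the last letter of $g(x_i)$ is the inverse of the first letter of $g(x_{i+1})$, equivalently when $g(x_i^{-1})$ and $g(x_{i+1})$ share their first letter. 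Since $x_i x_{i+1} \neq 1$ forces $x_i^{-1} \neq x_{i+1}$, the marked property of $g(\Sigma \cup \Sigma^{-1})$ rules this out.

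Thus $p$ is a reduced closed circuit at $v_g$, so its label lies in $L(\Gamma_g, v_g)$ and, by characterisation (\ref{list:unfoldable}), is a freely reduced word. It is also nonempty, since each petal has positive length (the marked hypothesis implicitly requires every $g(x)$ to be nonempty, lest distinct elements of $g(\Sigma \cup \Sigma^{-1})$ fail to begin with different letters). Hence $g(w)$ is a nonempty freely reduced word over $\Delta \cup \Delta^{-1}$, representing a nontrivial element of $F(\Delta)$, and so $g$ is injective. The only subtle step is verifying that the concatenated circuit is reduced, which is just the translation of the marked condition on letters into non-backtracking at $v_g$; the rest is routine.
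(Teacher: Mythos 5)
Your proof is correct and follows the same route as the paper, which simply invokes Characterisation (\ref{list:unfoldable}) and cites \cite[Proposition 3.8]{Kapovich2002Stallings} for the fact that a reduced word maps to a reduced (hence nontrivial) closed circuit in the Stallings graph; you have just written out the argument behind that citation, including the correct observation that the marked condition on $g(\Sigma\cup\Sigma^{-1})$ is exactly what prevents backtracking at the hub $v_g$. Your care about petals of positive length is a reasonable reading of the definition of a marked set, and the only cosmetic quibble is that a join ``backtracks'' only if the relevant labels cancel (the converse is not needed), which does not affect the argument.
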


\section{The reduction of an instance in free groups}
\label{sec:CoreGraph}
By an \emph{immersed} instance of the \PCPG{} we mean an instance $I=(\Sigma, \Delta, g, h)$ where both $g$ and $h$ are immersions.
In this section we define the ``reduction'' of an immersed instance of the \PCPG{}, which is similar to the reduction in the free monoid case.

Let $\Gamma$ be a directed, labeled graph and $u\in V\Gamma$ a vertex of $\Gamma$. 
The \emph{core graph of $\Gamma$ at $u$}, written $\core_u(\Gamma)$, is the maximal subgraph of $\Gamma$ containing $u$ but no vertices of degree $1$, except possibly $u$ itself. 
Note that $L(\core_u(\Gamma), u)=L(\Gamma, u)$.
For $\Gamma_1$, $\Gamma_2$ directed, labeled graphs, the \emph{product graph of $\Gamma_1$ and $\Gamma_2$}, denoted $\Gamma_1\otimes\Gamma_2$, is the subgraph of $\Gamma_1\times\Gamma_2$ with vertex set $V\Gamma_1\times V\Gamma_2$ and edge set $\{(e_1, e_2)\mid e_i\in E\Gamma_i^{\pm1}, label(e_1)=label(e_2)\}$. One may think of the standard construction of an automaton recognising the intersection of two regular languages, each given by a finite state automaton $\Gamma_i$ with start state $s_i$, where the core of $\Gamma_1\otimes\Gamma_2$ at $(s_1,s_2)$ is the automaton recognising this intersection.

\p{Core graph of a pair of morphisms}
Let $g: F(\Sigma_1)\rightarrow F(\Delta)$, $h: F(\Sigma_2)\rightarrow F(\Delta)$ be morphisms. The \emph{core graph of the pair $(g,h)$}, denoted $\core(g, h)$, is the core graph of $\Gamma_g\otimes\Gamma_h$ at the vertex $v_{g, h}=(v_g, v_h)$, so $\core(g, h)=\core_{v_{g, h}}(\Gamma_g\otimes\Gamma_h)$.
We shall refer to $v_{g, h}$ as the \emph{central vertex of $\core(g, h)$}.
Note that $\core(g, h)$ represents the intersection of the two images \cite[Lemma 9.3]{Kapovich2002Stallings}, in the sense that
$$L(\core(g, h), v_{g, h})=\im(g)\cap\im(h).$$
Write $\delta_g:\core(g, h)\rightarrow{\Gamma}_g$ and $\delta_h:\core(g, h)\rightarrow{\Gamma}_h$ for the restriction of $\core(g, h)$ to the $g$ and $h$ components, respectively, so $\delta_g(e_1, e_2)=e_1$, etc.

Now, let $g, h$ be immersions. The graph $\core(g, h)$ is a bouquet and every element of $L(\core(g, h), v_{g, h})$ is freely reduced \cite[Lemma 9.2]{Kapovich2002Stallings}.
We therefore have free group morphisms $g':L(\core(g, h), v_{g, h})\rightarrow L(\Gamma_g, v_g)$ and $h':L(\core(g, h), v_{g, h})\rightarrow L(\Gamma_h, v_h)$ induced by the maps $\delta_g$, $\delta_h$, where $L(\Gamma_g, v_g)=F(\Sigma_1)$ and $L(\Gamma_h, v_h)=F(\Sigma_2)$. These maps are computable \cite[Corollary 9.5]{Kapovich2002Stallings}.
Let $\Sigma'$ be the alphabet whose elements consist of the petals of $\core(g,h)$. Then $\Sigma'$ generates the free group $L(\core(g, h), v_{g, h})$, so $F(\Sigma')=L(\core(g, h), v_{g, h})$, and we see that both $g'$ and $h'$ are immersions with $g': F(\Sigma')\rightarrow F(\Sigma_1)$, $h': F(\Sigma')\rightarrow F(\Sigma_2)$. The map $gg'=hh'$, which we shall call $k$ (so $k: F(\Sigma')\rightarrow F(\Delta)$) is the composition of immersions and hence is itself an immersion.
We therefore have the following.

\begin{lemma}
\label{lem:unfoldableretracts}
If $g:F(\Sigma_1)\rightarrow F(\Delta)$ and $h:F(\Sigma_2)\rightarrow F(\Delta)$ are immersions then the corresponding maps $g': F(\Sigma')\rightarrow F(\Sigma_1)$, $h': F(\Sigma')\rightarrow F(\Sigma_2)$ and $k: F(\Sigma)\rightarrow F(\Delta)$, where $k=gg'=hh'$, are immersions and are computable.
\end{lemma}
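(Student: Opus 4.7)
The plan is to verify the three assertions of the lemma in turn: (i) $g'$ and $h'$ are immersions, (ii) $k=gg'=hh'$ is an immersion, and (iii) all three maps are computable. Most of the substance is already contained in the paragraphs preceding the lemma statement, which cite \cite[Lemma 9.2]{Kapovich2002Stallings} and \cite[Corollary 9.5]{Kapovich2002Stallings}; my proof would simply unpack these into explicit verifications against Lemma \ref{lem:immersionDef}.

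For (i), I would work with characterisation (\ref{list:marked}) of Lemma \ref{lem:immersionDef} and show that $g'(\Sigma'\cup (\Sigma')^{-1})$ is marked. The key geometric observation is that the projection $\delta_g$ is injective on edges leaving $v_{g,h}$: if $(e_1,e_2)$ and $(e_1,e_2')$ were two edges of $\core(g,h)$ leaving $v_{g,h}$ with the same $\delta_g$-image, then $e_2$ and $e_2'$ would both be edges of $\Gamma_h$ leaving $v_h$ with the same $\Delta^{\pm1}$-label (namely $\operatorname{label}(e_1)$), which contradicts $h$ being an immersion (characterisation (\ref{list:unfoldable}) applied to $\Gamma_h$). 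Since each petal of $\core(g,h)$ corresponds to a distinct letter of $\Sigma'^{\pm1}$ and each petal of $\Gamma_g$ to a distinct letter of $\Sigma_1^{\pm1}$, the above injectivity says that distinct letters of $\Sigma'\cup (\Sigma')^{-1}$ get mapped by $g'$ to words in $F(\Sigma_1)$ starting with distinct letters of $\Sigma_1^{\pm1}$, so $g'$ is marked. The argument for $h'$ is symmetric.

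For (ii), I would verify directly that the composition of two immersions is an immersion, using characterisation (\ref{list:kapovichImmersion}). Given $x,y\in\Sigma'\cup (\Sigma')^{-1}$ with $xy\neq 1$, the immersion property of $g'$ gives $|g'(xy)|=|g'(x)|+|g'(y)|$, so $g'(x)g'(y)$ is freely reduced. Writing $g'(x)=ua$ and $g'(y)=bv$ with $a,b\in\Sigma_1\cup\Sigma_1^{-1}$, one has $ab\neq 1$, so applying characterisation (\ref{list:kapovichImmersion}) to $g$ at the junction $ab$ (and to each other consecutive pair of letters inside $ua$ and $bv$) produces $|k(xy)|=|g(g'(x)g'(y))|=|g(g'(x))|+|g(g'(y))|=|k(x)|+|k(y)|$, as required.

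For (iii), the core graph $\core(g,h)$ is computable as a subgraph of the product $\Gamma_g\otimes\Gamma_h$ by the standard Stallings construction, and the morphisms $g',h'$ induced by $\delta_g,\delta_h$ are computable by reading off labels of petals, as recorded in \cite[Corollary 9.5]{Kapovich2002Stallings}; then $k=gg'$ is computable as a composition. The only point requiring real care is the edge-injectivity step in (i); beyond that, the proof is essentially bookkeeping on top of the cited results of Kapovich.
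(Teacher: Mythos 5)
Your proposal is correct and follows essentially the same route as the paper: the paper's argument is the discussion preceding the lemma, which derives the claim from the core-graph construction via \cite[Lemma 9.2]{Kapovich2002Stallings} and \cite[Corollary 9.5]{Kapovich2002Stallings} together with the observation that compositions of immersions are immersions. You simply unpack those citations into explicit verifications against Lemma \ref{lem:immersionDef} (the edge-injectivity of $\delta_g$ at $v_{g,h}$ being exactly the content of the folding/determinism argument), which is a faithful elaboration rather than a different proof.
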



\p{Reduction}
The \emph{reduction} of an immersed instance $I=(\Sigma, \Delta, g, h)$ of the \PCPG{} is the instance $I'=(\Sigma', \Delta, g', h')$ where $\Sigma'$ is defined as above, and where $g'$ and $h'$ are as above, but with codomain $\Delta$ (which we may do as $\Sigma\subseteq\Delta$).
We additionally assume that $\Sigma'\subseteq\Sigma$; we can do this as $|\Sigma'|\leq|\Sigma|$ by Lemma \ref{lem:unfoldableretracts}.
As $I$ is immersed, it follows from Lemma \ref{lem:unfoldableretracts} that $I'$ is also immersed.
In the next section we show that the name ``reduction'' makes sense, as it reduces the ``prefix complexity'' of instances.
\begin{example}\label{ex:reduction}
Consider the maps $g, h:F(a, b, c) \rightarrow F(x, y, z)$ given by $g(x)=aba^2, g(b)=y^{-1}, g(c)=zxz$ and $h(a)=x, h(b)=yx^2y, h(c)=z$.

Then the graph $\core(g, h)$ is a bouquet with two petals labelled $xyx^2y$ and $zxz$, and $\im(g)\cap \im(h)=\langle xyx^2y, zxz\rangle$. Moreover, $g(ab^{-1})=h(ab)=xyx^2y$ and $g(z)=h(zxz)=zxz.$ Then we can take $\Sigma'=\{a',b'\}$, and the maps given by $g'(a')=ab^{-1}, g(b')=c$ and $h'(a')=ab, h'(b')=cac$ are the reduction of $(g,h)$.
\end{example}

We now prove that reduction preserves equalisers.
Two instances $I_1$ and $I_2$ of the \PCPG{} are \emph{strongly equivalent} if the equalisers are isomorphic, which we write as $\eq(I_1)\cong\eq(I_2)$.

\begin{lemma}
\label{lem:equivalent}
Let $I'=(\Sigma', \Delta', g', h')$ be the reduction of $I=(\Sigma, \Delta, g, h)$ where $g$ and $h$ are immersions. Then $I$ and $I'$ are strongly equivalent, and $g'(\eq(I'))=\eq(I)=h'(\eq(I'))$.
\end{lemma}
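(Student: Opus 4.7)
My plan is to mimic the structure of the analogous monoid proof (Lemma~\ref{lem:MarkedLemma1}), but replace the ad-hoc references to Halava--Hirvensalo--de Wolf by direct arguments using the two facts the free-group setup gives us: the identity $gg'=hh'=k$ from Lemma~\ref{lem:unfoldableretracts}, and the characterisation $\im(k)=L(\core(g,h),v_{g,h})=\im(g)\cap\im(h)$ coming from the construction of $\core(g,h)$. Additionally, I will exploit that $g,h,g',h',k$ are all immersions, hence all injective by Lemma~\ref{lem:injective}. Throughout, I'll work with $\eq(I')=\{w\in F(\Sigma')\mid g'(w)=h'(w)\}$ and aim to show that the restriction $g'|_{\eq(I')}\colon\eq(I')\to\eq(I)$ is a bijective group homomorphism, which then forces $\eq(I)\cong\eq(I')$ and simultaneously establishes the set equalities.

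For the forward inclusion $g'(\eq(I'))\subseteq\eq(I)$, I will take $w\in\eq(I')$, set $x:=g'(w)=h'(w)$, and directly compute $g(x)=(gg')(w)=k(w)=(hh')(w)=h(x)$, so $x\in\eq(I)$. The inclusion $h'(\eq(I'))\subseteq\eq(I)$ is obtained the same way (or simply by noting $g'(w)=h'(w)$ on $\eq(I')$).

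For the reverse inclusion $\eq(I)\subseteq g'(\eq(I'))$, which is the more interesting direction, I will take $x\in\eq(I)$, so that $g(x)=h(x)$. This common value lies in $\im(g)\cap\im(h)=\im(k)$, so there exists $w\in F(\Sigma')$ with $k(w)=g(x)$. The crucial observation is that this single $w$ works for both $g'$ and $h'$: from $k=gg'$ I get $g(g'(w))=g(x)$, and injectivity of $g$ yields $g'(w)=x$; from $k=hh'$ I get $h(h'(w))=k(w)=g(x)=h(x)$, and injectivity of $h$ yields $h'(w)=x$. Therefore $g'(w)=h'(w)=x$, so $w\in\eq(I')$ and $g'(w)=x$. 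The same $w$ also gives $h'(w)=x$, proving simultaneously $\eq(I)\subseteq h'(\eq(I'))$.

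Finally, for strong equivalence I will observe that the restriction $g'|_{\eq(I')}\colon\eq(I')\to\eq(I)$ is a group homomorphism, surjective by the previous paragraph, and injective since $g'$ itself is injective (Lemma~\ref{lem:injective}); hence it is an isomorphism, giving $\eq(I')\cong\eq(I)$. The only step that requires care (rather than being purely formal) is invoking $\im(k)=\im(g)\cap\im(h)$ and the injectivity of $k$ (via its immersion status) to ensure a single preimage $w$ witnesses both $g'(w)=x$ and $h'(w)=x$; once this is set up, the rest of the argument is essentially a diagram chase.
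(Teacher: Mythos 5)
Your proposal is correct and follows essentially the same route as the paper: the forward inclusion is the identical computation $g(g'(w))=k(w)=h(h'(w))$, and the reverse inclusion likewise hinges on realising $g(x)=h(x)$ inside $\core(g,h)$ and then cancelling $g$ and $h$ by injectivity (Lemma~\ref{lem:injective}). The only cosmetic difference is that you package the key step as $\im(k)=\im(g)\cap\im(h)$ (which the paper has already recorded via $L(\core(g,h),v_{g,h})=\im(g)\cap\im(h)$ and $F(\Sigma')=L(\core(g,h),v_{g,h})$), whereas the paper phrases the same fact as the existence of a closed path $p_x$ in $\core(g,h)$ projecting to $g(x)$ and $h(x)$.
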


\begin{proof}
It is sufficient to prove that $g'|_{\eq(I')}$ is injective and $g'(\eq(I'))=\eq(I)$; that $h'(\eq(I'))=\eq(I)$ follows as $g'|_{\eq(I')}=h'|_{\eq(I')}$.

As $g'$ is an immersion it is injective, by Lemma \ref{lem:injective}. Therefore, $g'|_{\eq(I')}$ is injective. To see that $\im(g'|_{\eq(I')})\leq\eq(I)$, suppose $x'\in\eq(I')$. Writing $x=g'(x')=h'(x')$, we have
$ g(x)=gg'(x')=hh'(x')=h(x)$ and so $x=g'(x')\in\eq(I)$, as required.

To see that $\im(g'|_{\eq(I')})\geq\eq(I)$, 
suppose $x\in\eq(I)$.
Then there exists a path $p_x\in\core(g, h)$, $\iota(p_x)=v_{g,h}=\tau(p_x)$, such that $\gamma_g\delta_g(p_x)=g(x)=h(x)=\gamma_h\delta_h(p_x)$ \cite[Proposition 9.4]{Kapovich2002Stallings}, where $\gamma_g: \Gamma_g\rightarrow\Gamma_{\Delta}$ is the canonical morphism of directed, labeled graphs from $\Gamma_g$ to the bouquet $\Gamma_{\Delta}$ with $\Delta$ petals.
 Hence, writing $x'$ for the element of $F(\Sigma')$ corresponding to $p_x\in L(\core(g, h), v_{g, h})$, we have that $gg'(x')=g(x)=h(x)=hh'(x)$. As $h$ and $g$ are injective, by Lemma \ref{lem:injective}, we have that $g'(x')=x=h'(x')$ as required.
\end{proof}


\section{Prefix complexity of immersions in free groups}
\label{sec:prefix}
In this section we associate to an instance $I$ of the \PCPG{} a certain complexity, called the ``prefix complexity''. We prove that the process of reduction does not increase this complexity, and that for all $n\in\mathbb{N}$ there are only finitely many instances with complexity $\leq n$.

Let $I=(\Sigma, \Delta, g, h)$ be an immersive instance of the \PCPG{}.
We define, analogously to \cite[Section 4]{Halava2001marked} (see also \cite{Ehrenfeucht1982generalized}), the \emph{prefix complexity} $\sigma(I)$ as:
\begin{align*}
\sigma(I)
&=\left|\cup_{a\in\Sigma^{\pm1}}\{x\in F(\Delta)\mid \text{$x$ is a proper prefix of $g(a)$}\}\right|\\
&+\left|\cup_{a\in\Sigma^{\pm1}}\{x\in F(\Delta)\mid \text{$x$ is a proper prefix of $h(a)$}\}\right|.
\end{align*}
In the maps in Example \ref{ex:reduction}, $\sigma(I)=10+6=16$, and $\sigma(I')=2+4=6.$

The process of reduction does not increase the prefix complexity, and we prove this by using the fact that, for any $a\in \Sigma^{\pm1}$, the proper prefixes of $g(a)$ and $h(a)$ are in bijection with the proper initial subpaths of the petals of $\Gamma_g$ and $\Gamma_h$, respectively.

\begin{lemma}
\label{lem:PrefixComplexity}
Let $I=(\Sigma, \Delta, g, h)$ be an instance of the \PCPG{} with $g$ and $h$ immersions, and let $I'$ be the reduction of $I$. Then $\sigma(I')\leq\sigma(I)$.
\end{lemma}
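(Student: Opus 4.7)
The plan is to reinterpret prefix complexity in graph-theoretic terms and then exploit the fact that $\Gamma_{g'}$ and $\Gamma_{h'}$ can be read off from $\core(g,h)$.

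First, I would rephrase $\sigma(I)$ using the bijection mentioned just before the lemma statement. If $g$ is an immersion, the marked condition on $g(\Sigma\cup\Sigma^{-1})$ ensures that two prefixes of elements of $g(\Sigma^{\pm1})$ coincide in $F(\Delta)$ only if they correspond to the same initial subpath (in either direction) of a petal of $\Gamma_g$. Each non-$v_g$ vertex of $\Gamma_g$ is the terminal vertex of exactly two proper initial subpaths from $v_g$ (one forward along its petal, one backward), and these two labels have different first letters, again by the marked condition. Hence
\[
\sigma_g(I) = 2(|V(\Gamma_g)|-1) + \epsilon,
\]
where $\epsilon\in\{0,1\}$ accounts for whether the empty prefix is counted; the analogous formula holds for $h$, $g'$ and $h'$ with the same $\epsilon$. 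The lemma therefore reduces to
\[
|V(\Gamma_{g'})| + |V(\Gamma_{h'})| \leq |V(\Gamma_g)| + |V(\Gamma_h)|.
\]

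Next, I would relate $V(\Gamma_{g'})$ to $V(\core(g,h))$. A petal of $\Gamma_{g'}$ arises from a petal $P_{a'}$ of $\core(g,h)$ by projecting via $\delta_g$ to a reduced closed circuit in $\Gamma_g$ and then collapsing each maximal traversal of a petal of $\Gamma_g$ to a single $\Sigma$-edge; this decomposition into petal traversals is unique because $g$ is an immersion. Under this collapsing, the intermediate vertices of the petal labelled $g'(a')$ correspond exactly to the intermediate vertices of $P_{a'}$ whose $\delta_g$-image is $v_g$. Summing over petals and adjoining the basepoint $v_{g,h}$ yields a bijection
\[
V(\Gamma_{g'}) \;\longleftrightarrow\; \{(v_g, u_h) \in V(\core(g,h))\}.
\]
Projecting the right-hand set to the second coordinate is injective, since the first coordinate is fixed at $v_g$, giving $|V(\Gamma_{g'})| \leq |V(\Gamma_h)|$. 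By symmetry $|V(\Gamma_{h'})| \leq |V(\Gamma_g)|$, and adding the two inequalities yields the required bound.

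The step I expect to require the most care is the bijection between $V(\Gamma_{g'})$ and $\{(v_g,u_h)\in V(\core(g,h))\}$: verifying it rigorously uses both that $\core(g,h)$ is a bouquet (so distinct petals share only $v_{g,h}$) and that the immersion hypothesis on $g$ forces the decomposition of a reduced closed circuit in $\Gamma_g$ into petal traversals to be unique and freely reduced, guaranteeing that the collapsing procedure produces the bouquet $\Gamma_{g'}$ itself rather than a quotient or folded version.
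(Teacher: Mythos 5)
Your argument is correct, and it establishes the same two ``crossed'' inequalities as the paper --- in effect $|G'|\leq|H|$ and $|H'|\leq|G|$ in the paper's notation --- but by a different mechanism at the decisive step. The paper works directly with sets of paths: it injects the set $G'$ of proper initial subpaths of petals of $\core(g,h)$ ending in $\delta_g^{-1}(v_g)$ into the set $H$ of proper initial subpaths of petals of $\Gamma_h$, by sending $q$ to $\delta_h(r_q)$, where $r_q$ is a shortest ``facing'' subpath running from the nearest vertex of $\delta_h^{-1}(v_h)$ back to $\tau(q)$, and then proves injectivity of $q\mapsto\delta_h(r_q)$ using the product structure of $\core(g,h)$. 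You instead convert everything into vertex counts: $\sigma_g(I)=2(|V\Gamma_g|-1)$ for an immersion (your $\epsilon$ is $0$ here, as the paper's proper prefixes are non-empty, but the inequality is insensitive to the convention), $|V\Gamma_{g'}|=|\delta_g^{-1}(v_g)|$ via the petal-traversal decomposition, and then $|V\Gamma_{g'}|\leq|V\Gamma_h|$ is immediate from the coordinate projection $\delta_g^{-1}(v_g)\hookrightarrow V\Gamma_h$. This eliminates the facing-path construction and its injectivity argument entirely; the cost is that you must justify that each circuit $\delta_g(P_{a'})$ decomposes uniquely into full petal traversals reading off a freely reduced word of length $|g'(a')|$, which you correctly isolate as the delicate point and which does follow from $\core(g,h)$ being a bouquet whose circuit labels are freely reduced (as recorded before Lemma \ref{lem:unfoldableretracts}). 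Both proofs rest on the same structural facts about $\core(g,h)\subseteq\Gamma_g\times\Gamma_h$; yours is more economical at the key step, while the paper's path-level bookkeeping stays closer to the free-monoid treatment it is modelled on.
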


\begin{proof}
We write $V_g\core(g, h)=\{(v_g, v) \in V\core(g, h) \mid v \in \Gamma_h\}=\delta_g^{-1}(v_g)$ for the set of vertices in the $\core(g,h)$ whose first component is the central vertex $v_g$ of $\Gamma_g$, and similarly for $V_h\core(g, h)$. Note that $V_g\core(g, h)\cap V_h\core(g, h)=\{v_{g, h}\}$.

By construction, each petal of $\Gamma_g$ and $\Gamma_h$ corresponds to a letter $a\in \Sigma^{\pm1}$, and we shall denote the petal also by $a$. Write $P{\Gamma}$ for the set of reduced paths in a graph ${\Gamma}$.
Similarly to $a\in P\Gamma_g$ and $a\in P\Gamma_h$, we map write $a\in P\core(g, h)$ for the petal in $\core(g, h)$ corresponding to $a\in (\Sigma')^{\pm1}$. From now on, all paths are assumed to be reduced.
Define
\begin{align*}
G&=\cup_{a\in\Sigma^{\pm1}}\{p\in {\Gamma}_{g}\mid \text{$p$ is a proper initial subpath of petal $a\in {\Gamma}_{g}$}\},\\
G'&=\cup_{a\in(\Sigma')^{\pm1}}\{p\in \core(g, h)\mid \text{$p$ is a proper initial subpath}\\&~~~~~~~~~~\text{of $a\in \core(g, h)$ s.t. its end vertex $\tau(p)\in V_{g}\core(g, h)$}\},
\end{align*}
and define $H$ and $H'$ analogously.
Hence, $\sigma(I)=|G|+|H|$ and analogously $\sigma(I')=|G'|+|H'|$. 

For $a\in(\Sigma')^{\pm1}$ let $q\in G'$ be a subpath of $a\in P\core(g, h)$. Denote by $r_q$ the shortest subpath of $a$ intersecting $q$ at only one point, their common end vertex (that is, $\tau(q)=\tau(r_q)=q\cap r_q$), such that $\iota(r_q)\in V_{h}\core(g, h)$; the paths $q$ and $r_q$ can be seen as ``facing'' one another on $a$. As $V_{g}\core(g, h)\cap V_{h}\core(g, h)=\{v_{g, h}\}$, and as $q$ is a proper initial subpath of $a$, the projection $\delta_{h}(r_q)$ is a non-trivial path in $\Gamma_h$. Note also that $\iota(\delta_{h}(r_q))=v_{h}$, as $\iota(r_q)\in V_{h}\core(g, h)$, hence there exists some $b\in\Sigma^{\pm1}$ such that $\delta_{h}(r_q)$ is a proper initial subpath of the petal $b\in \Gamma_h$.
Therefore, $\delta_{h}(r_q)\in H$. Let $\xi_H: G'\rightarrow H$ be the map given by $\xi_H(q)=\delta_{h}(r_q)$.

We now prove that $\xi_H$ is injective.
Suppose $p, q\in G'$ are such that $\xi_H(p)=\xi_H(q)$, and let $r_p$ and $r_q$ be the paths obtained from $p$ and $q$, respectively, such that $\xi_H(p)=\delta_{h}(r_p)$ and $\xi_H(q)=\delta_{h}(r_q)$.
Write $e_p$ for the terminal edge of $r_p$, and $e_q$ for the terminal edge of $r_q$, and note that these two edges have the same label and direction as $\delta_{h}(e_p)=\delta_{h}(e_q)$.
Now, $\delta_{g}(\tau(e_p))=v_g=\delta_{g}(\tau(e_q))$ as $\tau(r_p), \tau(r_q)\in V_{g}\core(g, h)$, and as $e_p$ and $e_q$ have the same label and direction we have that $\delta_g(e_p)=\delta_g(e_q)$. Therefore, both $\delta_g$ and $\delta_h$ agree on $e_p$ and $e_q$, and so as $\core(g, h)$ is a subgraph of $\Gamma_g\times\Gamma_h$ we have that $e_p=e_q$. As $\core(g, h)$ is a bouquet, there exists a unique shortest reduced path $s$ such that $\iota(s)=v_{g, h}$ and $\tau(s)=s\cap e_p=\tau(e_p)$. Hence, $p=s=q$ as required.

Thus $\xi_H$ is injective, and so $|G'|\leq |H|$. The same will hold for an analogously defined function $\xi_H$ from $H'$ to $G$, so $|H'|\leq|G|$. Therefore, $\sigma(I')=|G'|+|H'|\leq|G|+|H|=\sigma(I)$.
\end{proof}

For a fixed number $n\geq1$ there are obviously only finitely many words which have $\leq n$ proper prefixes, and so the following is clear:

\begin{lemma}
\label{lem:DistinctInstances}
There exist only finitely many distinct instances $I=(\Sigma, \Delta, g, h)$ of the \PCPG{} that satisfy $\sigma(I)\leq n$.
\end{lemma}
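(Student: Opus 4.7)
The plan is to translate the hypothesis $\sigma(I) \leq n$ into uniform bounds on the data describing an instance, and then to count.

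The first step is an easy combinatorial observation: any reduced word $w \in F(\Delta)$ has exactly $|w|$ proper prefixes, namely the empty word, the first letter, the first two letters, and so on, up to the prefix consisting of all but the last letter. Since for each $a \in \Sigma^{\pm 1}$ the proper prefixes of $g(a)$ form a subset of the union appearing in the definition of $\sigma(I)$, we at once get $|g(a)| \leq n$, and similarly $|h(a)| \leq n$. So every generator image is drawn from the finite set of reduced words over $\Delta$ of length at most $n$.

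The second step is to bound the alphabet sizes. This lemma is designed to be applied inside a chain of reductions $I_0 \to I_1 \to \cdots$ issuing from a fixed starting instance $I_0 = (\Sigma_0, \Delta, g_0, h_0)$; by the construction of reduction we have $\Sigma_j \subseteq \Sigma_0$ for all $j$ and $\Delta$ is preserved, so $|\Sigma|$ and $|\Delta|$ are bounded uniformly along the chain. With $|\Sigma|$, $|\Delta|$, and the lengths of all $g(a), h(a)$ all bounded by functions of $n$ and of the starting data, the set of possible pairs of maps $(g, h)$ is finite, which is the claim.

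The main point that must be stated clearly, rather than a genuine obstacle, is the convention under which \emph{distinct instances} are compared: absent a fixed ambient alphabet the lemma is vacuous, since one could pad $\Sigma$ or $\Delta$ with unused letters without altering $\sigma(I)$. With the convention in place, the statement reduces to the counting argument above, in keeping with the author's remark that the result is ``clear''.
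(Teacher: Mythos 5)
Your proof is correct and follows the same route as the paper, whose entire argument is the one-line observation that only finitely many words have at most $n$ proper prefixes. Your additional remark about fixing the ambient alphabets is well taken: as literally stated the lemma fails (e.g.\ instances in which every generator image has length one satisfy $\sigma(I)\leq 2$ for arbitrarily large $\Sigma$ and $\Delta$), and the paper relies implicitly on the fact that along a reduction chain $\Delta$ is fixed and $\Sigma_j\subseteq\Sigma_0$, exactly the convention you spell out.
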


As the reduction $I'$ of an instance $I$ gives $\sigma(I')\leq \sigma(I)$, and as $|\Sigma'|\subseteq|\Sigma|$, this means that the process of iteratively computing reductions will eventually cycle.


\section{Solving the Algorithmic Equaliser Problem in free groups (\EPG{})}
\label{sec:TheoremsForPairs}
The algorithm for solving the \EPG{} for immersions is analogous to the algorithm for marked free monoid morphisms in Section \ref{sec:PCPM}.
Our algorithm starts by making reductions $I_0, I_1, I_2, \ldots$, beginning with $I_0=I$, the input instance. By Lemma \ref{lem:DistinctInstances}, we will obtain an instance $I_j=(\Sigma_j, \Delta, g_j, h_j)$ such that one of the following will occur:
\begin{enumerate}
\item\label{PCPAlg:case1} $|\Sigma_j|=1$.
\item\label{PCPAlg:case2} $\sigma(I_j)=0$.
\item\label{PCPAlg:case3} there exists some $i<j$ with $I_i=I_j$ (sequence starts cycling).
\end{enumerate}
Keeping in mind the fact that reductions preserve equalisers (Lemma \ref{lem:equivalent}), we obtain in each case a subset $\Sigma_{g, h}$ (possibly empty) which forms a basis for $\eq(I_j)$: For Case (\ref{PCPAlg:case1}), writing $\Sigma_j=\{a\}$, the result holds as if $g(a^i)=h(a^i)$ then $g(a)^i=h(a)^i$ and so $g(a)=h(a)$ as roots are unique in a free group. For Case (\ref{PCPAlg:case2}), $\sigma(I_j)=0$ is equivalent to $|g(a)|=|h(a)|=1$ for all $a\in \Sigma$. Suppose there exists some non-trivial reduced word $x=a_{i_1}^{\epsilon_1}\cdots a_{i_n}^{\epsilon_n}$ such that $g(x)=h(x)$. Then as $g$ and $h$ are injective, the words $g(a_{i_1})^{\epsilon_1}\cdots g(a_{i_n})^{\epsilon_n}$ and $h(a_{i_1})^{\epsilon_1}\cdots h(a_{i_n})^{\epsilon_n}$ are freely reduced and hence are the same word, and so $g(a_{i_j})=h(a_{i_j})$. The result then follows for Case (\ref{PCPAlg:case2}).
Case (\ref{PCPAlg:case3}) has a more involved proof.

\begin{lemma}
\label{lem:BasisOfReduction3}
Let $I=(\Sigma, \Delta, g, h)$ be an immersive instance of the \PCPG{} that starts a cycle (i.e. starting the reduction process with $I$ eventually gives $I$ again).
If $\eq(I)$ is non-trivial then a subset of $\Sigma$ forms a basis for $\eq(I)$.
\end{lemma}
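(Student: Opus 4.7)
The plan is to mimic the Case~(\ref{Tweaking:3}) argument of Theorem~\ref{thm:Tweaking}, with Lemmas~\ref{lem:equivalent} and~\ref{lem:injective} playing the role of their free-monoid analogues; the key new input is that an immersion $f$ is length-non-decreasing on reduced words (an immediate consequence of Characterisation~(\ref{list:kapovichImmersion}) of Lemma~\ref{lem:immersionDef}). Write the cycle as $I=I_0\to I_1\to\cdots\to I_n=I$, let $g_i',h_i':F(\Sigma_i)\to F(\Sigma_{i-1})$ be the corresponding reduction maps from Lemma~\ref{lem:unfoldableretracts}, and form the composed endomorphism $\Phi:=g_1'g_2'\cdots g_n'$ of $F(\Sigma)=F(\Sigma_n)$, together with its companion $\Psi:=h_1'h_2'\cdots h_n'$. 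Iterating Lemma~\ref{lem:equivalent} gives $\Phi(\eq(I))=\eq(I)$, and $\Phi$ is a composition of immersions, hence an immersion (Lemma~\ref{lem:unfoldableretracts}), hence injective (Lemma~\ref{lem:injective}); thus $\Phi|_{\eq(I)}$ is an automorphism of $\eq(I)$.

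The main technical step will be to establish that $\Phi|_{\eq(I)}$ is length-preserving. By the immersion property, $|\Phi(w)|\geq|w|$ for every reduced $w\in F(\Sigma)$. For $x\in\eq(I)$ I would consider the backward orbit $x_k:=(\Phi|_{\eq(I)})^{-k}(x)$ in $\eq(I)$: the lengths $|x_k|$ are non-increasing in $k$, so eventually constant, and since $F(\Sigma)$ contains only finitely many reduced words of any bounded length, the orbit must revisit an element. Injectivity of $\Phi$ then forces full periodicity $\Phi^N(x)=x$ for some $N\geq 1$, and chaining $|\Phi(\cdot)|\geq|\cdot|$ around this cycle collapses all the inequalities to equalities, yielding $|\Phi(x)|=|x|$. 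This Hopfian-style back-orbit argument is the heart of the proof.

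Finally I would descend length-preservation to $g_n'$: since $\Phi=g_1'\circ\cdots\circ g_n'$ and each $g_i'$ is a length-non-decreasing immersion, the non-decreasing chain $|x|\leq|g_n'(x)|\leq\cdots\leq|\Phi(x)|=|x|$ must be constant, so $|g_n'(x)|=|x|$, which by the immersion property forces $g_n'$ to send every letter of $x$ to a single letter of $\Sigma_{n-1}^{\pm1}$; the symmetric argument with $\Psi$ does the same for $h_n'$. Since $x\in\eq(I)=\eq(I_n)$, Lemma~\ref{lem:equivalent} applied at the $n$-th reduction gives $g_n'(x)=h_n'(x)$ as reduced words of length $|x|$, so a letter-by-letter comparison of their single-letter factors yields $g_n'(a)=h_n'(a)$ for every letter $a$ of $x$. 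Composing with the inclusion $F(\Sigma_{n-1})\hookrightarrow F(\Delta)$ used to define the instance maps of $I_n=I$, this reads $g(a)=h(a)$, so $a\in\eq(I)$. Hence $\eq(I)$ is generated by the subset $\Sigma\cap\eq(I)$ of the free basis $\Sigma$ of $F(\Sigma)$, and this subset, being a subset of a free basis, is itself a free basis of $\eq(I)$, giving the claim.
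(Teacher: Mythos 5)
Your proof is correct and follows essentially the same strategy as the paper's: form the composed automorphism of $\eq(I)$ around the cycle, show it is length-preserving using the length-non-decreasing property of immersions (Characterisation (3) of Lemma \ref{lem:immersionDef}) together with a finiteness argument, descend to conclude that the final reduction maps send the letters of any solution to single letters, and hence that those letters are themselves solutions. The only (minor) divergence is in the finiteness step: the paper proves length-preservation by induction on length strata, showing the automorphism maps $\eq(I)^{(m)}$ onto itself, whereas you use periodicity of backward orbits; both arguments are valid.
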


\begin{proof}
There is a sequence of reductions beginning and ending at $I$:
\[
I=I_0\rightarrow I_1\rightarrow \cdots\rightarrow I_{r-1}\rightarrow I_r=I
\]
where $I_i=(\Sigma_i, \Delta, g_i, h_i)$. By Lemma \ref{lem:equivalent}, $\eq(I_0)=g_1g_2\ldots g_r(\eq(I_r))=\eq(I_r)$ and so $\overline{g_r}=g_1g_2\ldots g_r$ restricts to an automorphism of $\eq(I_0)$, that is, $\overline{g_r}|_{\eq(I_0)}\in\aut(\eq(I_0))$. For $\overline{h_r}$ defined analogously, $\overline{h_r}|_{\eq(I_0)}\in\aut(\eq(I_0))$.
Write $\eq(I_k)^{(n)}$ for the set of words in $\eq(I_k)$ of length precisely $n$, and $\eq(I_k)^{(\leq n)}$ for the set of words in $\eq(I_k)$ of length at most $n$. Consider some $x_0\in\eq(I_0)$ and write $x_r=\overline{g_r}^{-1}(x_0)$. Then
\begin{align*}
x_0&=g_1g_2\ldots g_r(x_r)=\overline{g_r}(x_r),\\
x_0&=h_1h_2\ldots h_r(x_r)=\overline{h_r}(x_r).
\end{align*}
By Lemma \ref{lem:unfoldableretracts}, both $g_i$ and $h_i$ are immersions for each $i$, and so by Characterisation (\ref{list:kapovichImmersion}) of Lemma \ref{lem:immersionDef} we see that $|g_i(w)|\geq|w|$ for all $w\in F(\Sigma_i)$. Hence, $|x_0|\geq|x_r|$. Therefore, for all $m\geq1$ the map $\overline{g_r}$ induces a map $\overline{g_r}^{(m)}:\eq(I_r)^{(m)}\rightarrow\eq(I_r)^{(\leq m)}$. Clearly $\overline{g_r}^{(1)}$ is a bijection, and so we inductively see that $\overline{g_r}^{(m)}$ has image $\eq(I_r)^{(m)}$. Therefore, the automorphism $\overline{g_r}$ of $\eq(I_0)$ is length-preserving ($|\overline{g_r}(w)|=|w|$ for all $w\in \eq(I)$), and so maps the letters occurring in $x_r$ to letters.
Hence, $g_0(=g_r)$ and $h_0(=h_r)$ map the letters occuring in $x_r$ to letters, and it follows that every letter occuring in $x_r$ is a solution to $I_0$. Hence, a subset $\Sigma_{g, h}$ of $\Sigma_r$ forms a basis for $\eq(I_r)$.
\end{proof}

We now prove the central theorem of this article, which gives an algorithm to describe $\eq(I)$ as the image of an immersion. Note that not every subgroup of a free group is the image of an immersion: for example, if $|\Sigma|=n$, then no subgroup of $F(\Sigma)$ of rank $>n$ is the image of an immersion.
We store a morphism $f:F(\Sigma)\rightarrow F(\Delta)$ as a list $(f(a))_{a\in \Sigma}$.

\begin{theorem}
\label{thm:main1}
There exist an algorithm with input an immersive instance $I=(\Sigma, \Delta, g, h)$ of the \PCPG{} and output an immersion $\psi_{g, h}:F(\Sigma_{g, h})\rightarrow F(\Sigma)$ such that $\im(\psi_{g, h})=\eq(I)$.
\end{theorem}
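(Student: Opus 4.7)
The plan is to mirror the structure of the proof of Theorem \ref{thm:Tweaking} from the free monoid case, using reductions as the driving engine and the three terminal cases (\ref{PCPAlg:case1})--(\ref{PCPAlg:case3}) already catalogued above Lemma \ref{lem:BasisOfReduction3}. I would start with $I_0 = I$ and iteratively compute reductions $I_0 \to I_1 \to I_2 \to \cdots$, where each $I_{j+1} = (\Sigma_{j+1}, \Delta, g_{j+1}, h_{j+1})$ is the reduction of $I_j$ (this is computable by Lemma \ref{lem:unfoldableretracts}, which also shows each $g_j$, $h_j$ remains an immersion). Since $|\Sigma_{j+1}| \leq |\Sigma_j|$ and $\sigma(I_{j+1}) \leq \sigma(I_j)$ by Lemma \ref{lem:PrefixComplexity}, and since Lemma \ref{lem:DistinctInstances} guarantees only finitely many instances of bounded prefix complexity exist over a fixed alphabet, the process must eventually fall into one of the three cases: the alphabet collapses to a single letter, the prefix complexity drops to zero, or the sequence cycles.

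Next, in each terminal case I would produce a subset $\Sigma_{g,h} \subseteq \Sigma_j$ that is a basis for $\eq(I_j)$. Cases (\ref{PCPAlg:case1}) and (\ref{PCPAlg:case2}) are handled by the short arguments already given in the text preceding Lemma \ref{lem:BasisOfReduction3}: uniqueness of roots in a free group dispatches the one-letter case, and injectivity plus $|g(a)| = |h(a)| = 1$ forces $g$ and $h$ to agree letter-by-letter in the $\sigma(I_j)=0$ case. Case (\ref{PCPAlg:case3}) is exactly the content of Lemma \ref{lem:BasisOfReduction3}, which provides such a $\Sigma_{g,h} \subseteq \Sigma_j$ whenever $\eq(I_j)$ is non-trivial (and the empty set works otherwise).

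Having obtained $\Sigma_{g,h}$, I would then define $\psi_{g,h}$ by setting $\widetilde{g} = g_1 g_2 \cdots g_j : F(\Sigma_j) \to F(\Sigma)$ and letting $\psi_{g,h} := \widetilde{g}|_{F(\Sigma_{g,h})}$. This map is computable directly from the stored lists representing the $g_i$. Each $g_i$ is an immersion by Lemma \ref{lem:unfoldableretracts}, and compositions of immersions are immersions (by Characterisation (\ref{list:kapovichImmersion}) of Lemma \ref{lem:immersionDef}, or equivalently by observing that the marked condition is preserved under composition), so $\widetilde{g}$ is an immersion; since $\Sigma_{g,h} \subseteq \Sigma_j$, the restriction $\psi_{g,h}$ is an immersion as well. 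Finally, iterating Lemma \ref{lem:equivalent} yields
\[
\im(\psi_{g,h}) = \widetilde{g}(\langle \Sigma_{g,h} \rangle) = \widetilde{g}(\eq(I_j)) = g_1 g_2 \cdots g_j(\eq(I_j)) = \eq(I_0) = \eq(I),
\]
which delivers the required conclusion.

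The main obstacle is Case (\ref{PCPAlg:case3}), the cycling case, where the sequence of alphabets and prefix complexities stabilises without shrinking further; here there is no immediate alphabet-size or length reason for the equaliser to have a ``letter basis''. This is handled by Lemma \ref{lem:BasisOfReduction3}, whose proof leverages the fact that once we return to $I$ after $r$ reductions, the map $\overline{g_r} = g_1 \cdots g_r$ restricts to a length-preserving automorphism of $\eq(I)$ (length-preserving because each $g_i$ is an immersion and hence non-contracting), forcing every letter appearing in any equaliser element to itself lie in $\eq(I)$. The rest of the argument is routine bookkeeping of compositions and restrictions.
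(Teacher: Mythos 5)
Your proposal is correct and follows essentially the same route as the paper's own proof: iterate reductions until one of the three terminal cases is reached (invoking Lemma \ref{lem:DistinctInstances} for termination and Lemma \ref{lem:BasisOfReduction3} for the cycling case), extract a letter basis $\Sigma_{g,h}\subseteq\Sigma_j$, and define $\psi_{g,h}$ as the restriction of the composition $g_1g_2\cdots g_j$, with $\im(\psi_{g,h})=\eq(I)$ following from Lemma \ref{lem:equivalent}. No substantive differences.
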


\begin{proof}
Start by making reductions $I=I_0\rightarrow I_1\rightarrow \cdots$. By Lemma \ref{lem:DistinctInstances} we will obtain an instance $I_j=(\Sigma_j, \Delta, g_j, h_j)$ satisfying one of the Cases (\ref{PCPAlg:case1})--(\ref{PCPAlg:case3}) above, and in each case a subset $\Sigma_{g, h}$ of $\Sigma_j$ forms a basis for $\eq(I_j)$.
Since $\Sigma_j$ is computable, this basis is as well.

In order to prove the theorem, it is sufficient to prove that there is a computable immersion $\psi_{g, h}:F(\Sigma_{g, h})\rightarrow F(\Sigma)$. Consider the map $\tilde{g}=g_1g_2\cdots g_j: F(\Sigma_j)\rightarrow F(\Sigma)$ (and the analogous $\tilde{h}$). Now, each $g_i$ is an immersion, so $\tilde{g}$ is the composition of immersions and hence is an immersion.
Define $\psi_{g, h}=\tilde{g}|_{F(\Sigma_{g, h})}$. This map is computable from $\tilde{g}$, and as $\Sigma_{g, h}\subseteq \Sigma_j$, the map $\psi_{g, h}$ is an immersion. As $\im(\psi_{g, h})=g_{1}g_{2}\ldots g_j(\eq(I_j))=\eq(I)$, by Lemma \ref{lem:equivalent} and the above, the result follows.
\end{proof}

We now prove Corollary \ref{thm:basis}, which solves the \EPG{} for immersions of free groups.

\begin{proof}[Proof of Corollary \ref{thm:basis}]
To algorithmically obtain a basis for $\eq(I)$, first obtain the immersion $\psi_{g, h}: F(\Sigma_{g, h})\rightarrow F(\Sigma)$ given by Theorem \ref{thm:main1}. Then, recalling that we store $\psi_{g, h}$ as a list $(\psi_{g, h}(a))_{a\in \Sigma}$, the required basis is the set of elements in this list, so the set $\{\psi_{g, h}(a)\}_{a\in \Sigma}$.
\end{proof}


\section{Sets of immersions}
\label{sec:MainResults}
We now prove Theorem \ref{thm:main} and its corollaries. We first give a general result, from which the non-algorithmic part of Theorem \ref{thm:main} follows quickly. An \emph{immersed subgroup} $H$ of a free group $F(\Sigma)$ is a subgroup which is the image of an immersion. The proof of Lemma \ref{lem:mainINFINITE} is fundamentally identical to the proof of Lemma \ref{lem:mainMonoidsINFINITE}, via Characterisation \ref{list:marked} of Lemma \ref{lem:immersionDef}.

\begin{lemma}
\label{lem:mainINFINITE}
If $\{H_j\}_{j\in J}$ is a set of immersed subgroups of $F(\Sigma)$ then the intersection $\bigcap_{j\in J}H_j$ is immersed.
\end{lemma}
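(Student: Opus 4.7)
The plan is to mirror the proof of Lemma \ref{lem:mainMonoidsINFINITE} step by step, using Characterisation (\ref{list:marked}) of Lemma \ref{lem:immersionDef} in place of the monoid marked condition. First I would establish the free group analogue of that proof's opening claim: for each $j\in J$, write $H_j=\im(\phi_j)$ with $\phi_j:F(\Sigma_j)\rightarrow F(\Sigma)$ an immersion, and show that if $x,y\in H_j$ share a nontrivial common prefix as reduced words in $F(\Sigma)$, then their maximal common prefix lies in $H_j$. Expanding $x=\phi_j(\hat{x}_1)\cdots\phi_j(\hat{x}_l)$ and $y=\phi_j(\hat{y}_1)\cdots\phi_j(\hat{y}_k)$ from their reduced preimages in $F(\Sigma_j)$, Characterisation (\ref{list:kapovichImmersion}) forbids cancellation in these concatenations, and the marked condition forces $\hat{x}_1=\hat{y}_1$; iterating shows the maximal common prefix equals $\phi_j(\hat{x}_1)\cdots\phi_j(\hat{x}_m)\in H_j$.

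Next, let $K=\bigcap_{j\in J}H_j$ and, for each letter $a\in\Sigma\cup\Sigma^{-1}$ such that some element of $K$ begins with $a$, define $z_a$ as the longest word that is a prefix of every such element. Fixing any $w\in K$ starting with $a$, finiteness of its prefix set yields some $w'\in K$ whose maximal common prefix with $w$ equals $z_a$, and the first step applied inside each $H_j$ gives $z_a\in K$. The collection $\{z_a\}$ is marked by construction, and a length induction (using that for $w\in K$ starting with $a$, the tail $z_a^{-1}w\in K$ is a freely reduced word of strictly smaller length) shows $K=\langle\{z_a\}\rangle$.

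The main obstacle, and the step where the free group case genuinely departs from the monoid case, is to show that $\{z_a\}$ is closed under inversion, so that a choice of one representative from each pair $\{z_a,z_a^{-1}\}$ yields a basis $B$ with $B\cup B^{-1}=\{z_a\}$ marked. I would argue by contradiction: letting $b$ be the first letter of $z_a^{-1}$, $z_b$ is defined and is a prefix of $z_a^{-1}$; if this prefix were strict, I would choose $w\in K$ starting with $b$ whose maximal common prefix with $z_a^{-1}$ is exactly $z_b$, and reduce $z_a\cdot w\in K$. The initial $z_b$ of $w$ cancels precisely against the terminal $z_b^{-1}$ of $z_a$ (and no further, since $w$ and $z_a^{-1}$ diverge immediately after $z_b$), leaving a reduced word that begins with the letter $a$; hence $z_a$ must be a prefix of it. Comparing the letter right after the cancellation point then forces the letter of $w$ immediately after its $z_b$-prefix to equal the first letter of $z_b^{-1}$, which contradicts $w$ being freely reduced. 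Therefore $z_a^{-1}=z_b$, and the inclusion $F(B)\hookrightarrow F(\Sigma)$ is an immersion with image $K$, proving that $K$ is immersed.
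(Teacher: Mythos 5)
Your proof takes essentially the same route as the paper's: establish that the maximal common prefix of two elements of an immersed $H_j$ lies in $H_j$, deduce that the maximal common prefixes $z_a$ for the intersection lie in the intersection, and conclude that the intersection is immersed. The paper's proof stops at ``it follows that $\bigcap_{j\in J}H_j$ is immersed'', so the additional verifications you supply --- that the $z_a$ generate the intersection, that $\{z_a\}$ is closed under inversion, and hence that a set of representatives $B$ gives an immersion $F(B)\rightarrow F(\Sigma)$ with image $\bigcap_{j\in J}H_j$ --- are correct and simply make explicit what the paper leaves implicit (in your inversion argument the degenerate case $w=z_b$ also yields a contradiction, by length alone).
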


\begin{proof}
Firstly, suppose $x, y\in H_j$ for some $j\in J$, and let $z$ be their maximal common prefix. Then $z$ decomposes uniquely as $z_1z_2\cdots z_nz_{n+1}'$ such that each $z_k\in H_j$. As $H_j$ is immersed, and as $z$ is a maximal common prefix of $x$ and $y$, we have that $z\in H_j$.

Now, suppose $x, y\in\bigcap_{j\in J}H_j$, and suppose they both begin with some letter $a\in \Sigma\cup\Sigma^{-1}$. By the above, their maximal common prefix $z_a$ is contained in each $H_j$ and so is contained in $\bigcap_{j\in J}H_j$. Therefore, $z_a$ is a prefix of every element of $\bigcap_{j\in J}H_j$ beginning with an $a$. It follows that $\bigcap_{j\in J}H_j$ is immersed, as required.
\end{proof}

The following lemma corresponds to the algorithmic part of Theorem \ref{thm:main}. Similar to the above, the proof of the lemma is fundamentally identical to the proof of Lemma \ref{lem:mainMonoidsFINITE}.

\begin{lemma}
\label{lem:mainFINITE}
There exists an algorithm with input a finite set of immersions $S$ from $F(\Sigma)$ to $F(\Delta)$ and output an immersion $\psi_S:F(\Sigma_S)\rightarrow F(\Sigma)$ such that $\im(\psi_S)=\eq(S)$.
\end{lemma}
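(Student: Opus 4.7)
The plan is to argue by induction on $|S|$, mirroring exactly the proof of Lemma \ref{lem:mainMonoidsFINITE}, with Theorem \ref{thm:main1} playing the role of Theorem \ref{thm:Tweaking} and Lemma \ref{lem:unfoldableretracts} playing the role of Lemma \ref{lem:unfoldableretractsMonoids}.

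For the base case $|S|=2$, Theorem \ref{thm:main1} directly produces the required immersion $\psi_S=\psi_{g,h}:F(\Sigma_{g,h})\rightarrow F(\Sigma)$ with $\im(\psi_{g,h})=\eq(g,h)=\eq(S)$.

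For the inductive step, assume the result is established for all sets of $n\geq 2$ immersions, and let $S$ consist of $n+1$ immersions. Pick any two distinct $g,h\in S$ and set $S_g:=S\setminus\{g\}$, a set of $n$ immersions. The inductive hypothesis furnishes an immersion $\psi_{S_g}:F(\Sigma_{S_g})\rightarrow F(\Sigma)$ with $\im(\psi_{S_g})=\eq(S_g)$, while Theorem \ref{thm:main1} furnishes an immersion $\psi_{g,h}:F(\Sigma_{g,h})\rightarrow F(\Sigma)$ with $\im(\psi_{g,h})=\eq(g,h)$. Both maps are obtained algorithmically. I now feed the pair $(\psi_{S_g},\psi_{g,h})$ into Lemma \ref{lem:unfoldableretracts} (applied with the common codomain $F(\Sigma)$ playing the role of $F(\Delta)$): this yields computable immersions $\alpha:F(\Sigma_S)\rightarrow F(\Sigma_{S_g})$ and $\beta:F(\Sigma_S)\rightarrow F(\Sigma_{g,h})$ together with a computable immersion $\psi_S:=\psi_{S_g}\alpha=\psi_{g,h}\beta:F(\Sigma_S)\rightarrow F(\Sigma)$. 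By the defining property of $\core(\psi_{S_g},\psi_{g,h})$ recalled in Section \ref{sec:CoreGraph}, namely that its extended language is $\im(\psi_{S_g})\cap\im(\psi_{g,h})$, the image of $\psi_S$ is exactly this intersection, so
\[
\im(\psi_S)=\im(\psi_{S_g})\cap\im(\psi_{g,h})=\eq(S_g)\cap\eq(g,h)=\eq(S),
\]
completing the induction.

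There is essentially no obstacle beyond invoking the correct tools; the only point that deserves care is noting that $\psi_S$ is itself an immersion (which is immediate because Lemma \ref{lem:unfoldableretracts} guarantees that the map $k$ built from two immersions is an immersion), and that $\im(\psi_S)$ is the \emph{full} intersection rather than merely a subset of it, which is exactly the content of the identity $L(\core(\psi_{S_g},\psi_{g,h}),v)=\im(\psi_{S_g})\cap\im(\psi_{g,h})$ used in Section \ref{sec:CoreGraph}.
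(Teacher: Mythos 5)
Your proof is correct and follows essentially the same route as the paper: induction on $|S|$ with Theorem \ref{thm:main1} as the base case, and Lemma \ref{lem:unfoldableretracts} applied to the pair $(\psi_{S_g},\psi_{g,h})$ to realise $\eq(S_g)\cap\eq(g,h)=\eq(S)$ as the image of a computable immersion. The extra remark about $L(\core(\psi_{S_g},\psi_{g,h}),v)=\im(\psi_{S_g})\cap\im(\psi_{g,h})$ is a harmless elaboration of what the paper leaves implicit.
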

\begin{proof}
We proceed by inducting on $|S|$.
By Theorem \ref{thm:main1}, the result holds if $|S|=2$.
Suppose the result holds for all sets of $n$ immersions, $n\geq2$, and let $S$ be a set of $n+1$ immersions.
Take elements $g, h\in S$, and write $S_g=S\setminus\{g\}$. By hypothesis, we can algorithmically obtain immersions $\psi_{S_g}:F(\Sigma_{S_g})\rightarrow F(\Sigma)$ and $\psi_{g, h}:F(\Sigma_{g, h})\rightarrow F(\Sigma)$ such that $\im(\psi_{S_g})=\eq(S_g)$ and $\im(\psi_{g, h})=\eq(g, h)$.

By Lemma \ref{lem:unfoldableretracts}, there exists a (computable) immersion $\psi_S: F(\Sigma_S)\rightarrow F(\Sigma)$ such that $\im(\psi_S)=\im(\psi_{S_g})\cap\im(\psi_{g, h})$ (the map $\psi_S$ corresponds to the map $k$ in the lemma, and $\Sigma_S$ to $\Sigma'$). Then we have the required equality:
\begin{align*}
\im(\psi_S)
&=\im(\psi_{S_g})\cap\im(\psi_{g, h})\\
&=\eq(S_g)\cap\eq(g, h)\\
&=\eq(S).
\end{align*}
\end{proof}

We now prove Theorem \ref{thm:main}, which states that the equaliser is the image of a computable immersion.

\begin{proof}[Proof of Theorem \ref{thm:main}]
By Lemma \ref{lem:mainINFINITE}, there exists an alphabet $\Sigma_S$ and an immersion $\psi_S:F(\Sigma_S)\rightarrow F(\Sigma)$ such that $\im(\psi_S)=\eq(S)$, while by Lemma \ref{lem:mainFINITE} if $S$ is finite then such an immersion can be algorithmically found.
\end{proof}

We now prove Corollary \ref{corol:SPCP}, which solves the simultaneous \PCPG{} for immersions.

\begin{proof}[Proof of Corollary \ref{corol:SPCP}]
First find a basis for $\eq(I)$: obtain the immersion $\psi_{g, h}: F(\Sigma_{g, h})\rightarrow F(\Sigma)$ given by Theorem \ref{thm:main}. Then, recalling that we store $\psi_{g, h}$ as a list $(\psi_{g, h}(a))_{a\in \Sigma}$, the required basis is the set of elements in this list, so the set $\{\psi_{g, h}(a)\}_{a\in \Sigma}$.
Then $\eq(S)$ is trivial if and only if this basis is empty.
\end{proof}

Finally, we prove Corollary \ref{thm:rankSETS}, which says that $\eq(S)$ is of rank $\leq|\Sigma|$.

\begin{proof}[Proof of Corollary \ref{thm:rankSETS}]
Consider the immersion $\psi_{g, h}: F(\Sigma_{g, h})\rightarrow F(\Sigma)$ given by Theorem \ref{thm:main}.
As $\im(\psi_{g, h})=\eq(S)$ we have that $\rk(\eq(S))\leq|\Sigma_{g, h}|$, while as $\psi_{g, h}$ is an immersion we have that $|\Sigma_{g, h}|\leq|\Sigma|$, and the result follows.
\end{proof}


\section{Algorithm to compute the equaliser}
\label{sec:algorith}
Theorems \ref{thm:mainMonoids} and \ref{thm:main} produce the equaliser of a set $S$ of morphisms as the image of a computable map $\psi_S$.  For $S=\{g, h\}$, the structure of the algorithm that gives $\psi_S$ (as a list of elements representing the images of the generators) is given below. 
The values for $M$ in step \ref{pairs:2} correspond to the number of instances of complexity $\leq\sigma(I)$, as explained in Section \ref{sec:Complexity}.
%
\begin{enumerate}
\item Input $I=(\Sigma, \Delta,g,h)$. 
\item Set $c=;0$, $i:=0$, $I_0:=I$
\item\label{pairs:2} Set $M:=(|\Delta|+1)^{2|\Sigma|(\sigma(I)+1)}$ (monoids) or $M:=(2|\Delta|)^{2|\Sigma|(\sigma(I)+1)}$ (groups)
\item\label{pairs:3}  $i:=i+1$
\item Reduce instance $I_{i-1}$ to $I_i$ (as in Sections \ref{sec:PCPM} and \ref{sec:CoreGraph}); store $I_i$ in memory
\item If $I_i$ has source alphabet of size $1$ or $\sigma=0$ then:
\begin{enumerate}
\item Compute a basis $B$ for $\eq(I_i)$
\item Print \texttt{composition($B$, $i$)} (see below) and terminate.
\end{enumerate}
\item If $I_i$ is simpler than $I_{i-1}$ (smaller source alphabet or $\sigma$) then set $c=0$ and goto (\ref{pairs:3})
\item If $c>M$ then there exists a cycle which starts with $I_i$.
\begin{enumerate}
\item Compute a basis $B$ for $\eq(I_i)$
\item Print \texttt{composition($B$, $i$)} and terminate.
\end{enumerate}
\end{enumerate}

Procedure {\texttt{composition($B$, $i$)} computes the composition of a map, stored as a list $B$, with the maps obtained in the reduction process, indexed from $i$ downwards.

\vspace{5pt}
\noindent{\texttt{composition($B$, $i$)}}
\begin{enumerate}
\item\label{Composition:1} Set $B:=g_i(B)$, where $g_i$ is loaded from memory
\item  $i:=i-1$
\item If $i\geq0$, goto (\ref{Composition:1}); else, output $B$.
\end{enumerate}



\section{Complexity analysis}
\label{sec:Complexity}
The size of an instance $I=(\Sigma, \Delta, S)$, $S$ a set of morphisms, is $|\Sigma|+|\Delta|+\sum_{g\in S}\sum_{a \in \Sigma^{\pm 1}}|g(a)|$.
The algorithm underlying Theorem \ref{thm:mainMonoids} can be run with $O(2^n)$ space, where $n$ is the size of the input instance $I$, which gives a time bound of $O(2^{2^n})$. The space grows exponentially, unlike in \cite{Halava2001marked}, because the algorithm computes instances that must each be stored (as the immersion $\psi_{g, h}$ is their composition; this corresponds to the function \texttt{composition($B$, $i$)}, above).
To obtain this space complexity, first suppose $|S|=2$ (so consider the function \texttt{pairs($g$, $h$)}, above). There are at most $(|\Delta|+1)^{2|\Sigma|(\sigma(I)+1)}$ instances $I_j$ with $\sigma(I_j)\leq\sigma(I)$ \cite[Proof of Lemma 3]{Halava2001marked}, which is $O(2^n)$. Every other procedure requires asymptotically less space, and hence if $|S|=2$ we require $O(2^n)$ space. For $S=\{g_1, \ldots, g_k\}$, note that we only need to compute the immersions corresponding to $\eq(g_{i}, g_{i+1})$ for $1\leq i<k$ (as these intersect to give $\eq(S)$), and these can all be stored in $(k-1)\times O(2^n)=O(2^n)$ space. Intersection corresponds to reduction, and reduction can be done in PSPACE \cite[Section 6]{Halava2001marked}. Hence, the algorithm can be run in $O(2^n)$ space.

Similarly, the algorithm underlying Theorem \ref{thm:main} runs in $O(2^n)$ space, where $n$ is the input size. The main difference to the above is that there are $O(2^n)$ instances $I_j$ with $\sigma(I_j)\leq\sigma(I)$. To see this, write $m:=\sigma(I)$ and $d:=|\Delta|$. If $I_j=(\Sigma_j, \Delta_j, g_j, h_j)$ is such that $\sigma(I_j)\leq m$ then $|g(a)|\leq m+1$ for all $a\in\Sigma_j^{\pm1}$, as $g(a)$ has at most $m$ proper prefixes, and similarly $|h(a)|\leq m+1$. There are $2d(2d-1)^{m}$ freely reduced words of length $m+1$ in $F(\Sigma_j)$, and so (by using the empty word) we see that there are at most $(2d)^{m+1}$ freely reduced words of length \emph{at most} $m+1$.
As each list of $2|\Sigma_j|$ words defines an instance, there are at most $\left(2d\right)^{2|\Sigma_j|(m+1)}\leq\left(2d\right)^{2|\Sigma|(m+1)}$ instances that satisfy $\sigma(I)\leq m$. This is $O(2^n)$ as required.

\section{The density of marked morphisms and immersions} \label{sec:Density}
Here we show that immersions and marked morphisms are not a negligible (i.e. density zero) subset of the entire set of free group and free monoid morphisms, respectively, but represent a strictly positive proportion of those.

Suppose $\Sigma=\{a_1, \dots, a_k\}$, and $k=|\Sigma| \geq |\Delta|=m$. 
A morphism in a free monoid or free group, $\phi:\Sigma^* \rightarrow \Delta^*$ or $\phi:F(\Sigma) \rightarrow F(\Delta)$, is uniquely determined by $(\phi(a_1), \dots,\phi(a_k))$. 

We start with the monoid case. There are $m^n$ words of length $n$ in $\Delta^*$, and $\sum_{1 \leq i \leq n} m^i \sim c m^n$ words of length $\leq n$, where $c=\frac{m}{m-1}$ and we write $a_n \sim b_n$ for $\lim_{n\to \infty}\frac{a_n}{b_n}=1$. If $\alpha_n$ is the number of morphisms from $\Sigma^*$ to $\Delta^*$ with images of length at most $n$, then $\alpha_n \sim (cm^n)^k$. Now let $\beta_n$ be the number of marked morphisms from $\Sigma^*$ to $\Delta^*$ with images of length at most $n$. For a marked morphism $\phi$, each word in the list $(\phi(a_1), \dots,\phi(a_k))$ must start with a different letter,  followed by any word of length $\leq n-1$. Since there are ${m \choose k}k!$ options for the first letters, $\beta_n \sim {m \choose k} k!(cm^{n-1})^k$ and we get:
\begin{proposition}
If $\alpha_n$ and $\beta_n$ are the numbers of morphisms and marked morphisms, respectively, from $\Sigma^*$ to $\Delta^*$, with images of length at most $n$, then the density of the marked morphisms among all morphisms is a positive constant: 
$$
\lim_{n\to \infty}\frac{\beta_n}{\alpha_n}=\lim_{n\to \infty}\frac{{m \choose k}k! (cm^{n-1})^k}{(cm^n)^k} = \frac{m!}{m^k(m-k)!}.
$$
\end{proposition}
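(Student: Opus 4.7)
The plan is essentially to carry out the two counts $\alpha_n$ and $\beta_n$ carefully and then evaluate the ratio in the limit; the argument is a routine verification of the asymptotics sketched in the paragraph preceding the statement, so I expect no serious obstacle, only some care about what ``marked'' forces on the images.

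First I would verify the count of $\alpha_n$. A morphism $\phi:\Sigma^{*}\to\Delta^{*}$ is determined by the tuple $(\phi(a_1),\dots,\phi(a_k))\in(\Delta^{*})^{k}$, and the number of words of length at most $n$ in $\Delta^{*}$ equals $\sum_{i=0}^{n}m^{i}=(m^{n+1}-1)/(m-1)$, which is asymptotic to $cm^{n}$ with $c=m/(m-1)$. Raising to the $k$th power gives $\alpha_n\sim(cm^{n})^{k}$, which agrees with the statement (the empty word contributes a lower-order term, so whether we start the sum at $i=0$ or $i=1$ does not affect the asymptotics).

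Next I would verify the count of $\beta_n$. By the definition of marked morphism, each $\phi(a_i)$ must be nonempty and the first letters of $\phi(a_1),\dots,\phi(a_k)$ must be pairwise distinct elements of $\Delta$. I would parameterise such a $\phi$ by first choosing an injective function $\Sigma\to\Delta$ assigning to each $a_i$ its initial letter $\ell_i$, and then choosing the suffix $w_i\in\Delta^{*}$ with $|w_i|\le n-1$. The number of injections is $m(m-1)\cdots(m-k+1)=\binom{m}{k}k!=m!/(m-k)!$, and the number of admissible suffix tuples is $\bigl(\sum_{i=0}^{n-1}m^{i}\bigr)^{k}\sim(cm^{n-1})^{k}$. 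Therefore
\[
\beta_n \;\sim\; \binom{m}{k}k!\,(cm^{n-1})^{k}.
\]

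Finally I would take the ratio and compute the limit. The factors $c^{k}$ cancel, and $m^{k(n-1)}/m^{kn}=m^{-k}$, giving
\[
\lim_{n\to\infty}\frac{\beta_n}{\alpha_n}
=\lim_{n\to\infty}\frac{\binom{m}{k}k!(cm^{n-1})^{k}}{(cm^{n})^{k}}
=\binom{m}{k}k!\cdot m^{-k}
=\frac{m!}{m^{k}(m-k)!}.
\]
The only subtle point, and the closest thing to an obstacle, is the ``$\sim$'' step: I would make sure the error terms coming from $\sum_{i=0}^{n}m^{i}=cm^{n}+O(1)$ and from the nonempty-image constraint do not spoil the limit, which follows because these corrections are polynomial in $n$ of strictly smaller exponential order than the main term $m^{kn}$, so they vanish after division.
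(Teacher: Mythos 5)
Your proposal is correct and follows essentially the same route as the paper: count $\alpha_n$ via tuples of words of length $\le n$, count $\beta_n$ by choosing an injection $\Sigma\to\Delta$ for the initial letters ($\binom{m}{k}k!$ ways) and then arbitrary suffixes of length $\le n-1$, and take the ratio. Your extra care about the lower-order error terms is a welcome, if minor, tightening of the paper's asymptotic shorthand.
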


In the free group case the counting is similar, but there are more restrictions on the images of an immersion: first, all images need to be reduced words, and second, not just their first letters are constrained, but also their last letters. For some $\phi$, let the set of first letters of $(\phi(a_1), \dots,\phi(a_k))$ be $F \subset \Delta^{\pm 1}$, and the set of inverses of the last letters be $L\subset \Delta^{\pm 1}$. Then $\phi:F(\Sigma) \mapsto F(\Delta)$ is an immersion if all letters in $F$ are distinct, all the letters in $L$ are distinct, which implies $|F|=|L|=k$, and furthermore $F \cap L =\emptyset$. An image $\phi(a_i)$ of length $n$ has the form $\phi(a_i)=\alpha x_1 x_2 \dots x_{n-2} \beta$, where $\alpha \in F$, $\beta^{-1} \in L$, $x_i \in \Delta^{\pm 1}$, and $\phi(a_i)$ is reduced, so $x_1 \neq \alpha^{-1}$ and $x_{n-2}\neq \beta^{-1}$. 
Counting such words is more delicate than in the monoid case, but the asymptotics are similar, due to the following result (\cite[Proposition 1]{EJC2006}).

\begin{proposition}
Let $A$ and $B$ be subsets of $\Delta^{\pm 1}$. The number of elements of length $n$ in $F(\Delta)$ that do not start with a letter in $A$ and do not end with a letter in $B$ is equal to
$$
f_{A,B}(n)=\frac{(2m - |A|)(2m - |B|)(2m - 1)^{n-1} + xm + (-1)^n(|A||B| - ym)}{2m},
$$
where $x = |A \cap B| - |A^{-1} \cap B|, y = |A \cap B| + |A^{-1} \cap B|$, and $m=|\Delta|$.
\end{proposition}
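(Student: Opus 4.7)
The plan is to reduce the count to a transfer-matrix computation on the alphabet $\Delta^{\pm1}$, diagonalise the transfer matrix using its symmetry under inversion, and then sum the matrix entries indexed by the forbidden start/end letters.

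First I would introduce the $2m\times 2m$ matrix $M$ indexed by $\Delta^{\pm1}$ with $M_{i,j}=1$ unless $j=i^{-1}$, in which case $M_{i,j}=0$. A reduced word of length $n$ is exactly a path in this digraph, so the number of such words starting with letter $a$ and ending with letter $b$ equals $(M^{n-1})_{a,b}$, and therefore $f_{A,B}(n)=\sum_{a\notin A,\,b\notin B}(M^{n-1})_{a,b}$. The key observation is that $M=J-P$, where $J$ is the all-ones matrix and $P$ is the permutation matrix of the involution $i\mapsto i^{-1}$; a direct check gives $JP=PJ=J$, so $J$ and $P$ commute and can be simultaneously diagonalised.

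Next I would write down the common eigenspaces. The line $\mathbb{R}\mathbf 1$ is fixed by $P$ and $J\mathbf 1=2m\mathbf 1$, so on it $M$ acts as $2m-1$. The orthogonal complement of $\mathbf 1$ inside the $(+1)$-eigenspace of $P$ (dimension $m-1$) lies in $\ker J$, so $M$ acts there as $-1$. Finally the $(-1)$-eigenspace of $P$ (dimension $m$) also lies in $\ker J$, giving eigenvalue $+1$ for $M$. The three orthogonal projectors are $E_1=\tfrac{1}{2m}J$, $E_{-1}=\tfrac12(I+P)-E_1$, and $E_{+1}=\tfrac12(I-P)$, yielding the clean identity
$$
M^{n-1}=(2m-1)^{n-1}E_1+(-1)^{n-1}E_{-1}+E_{+1}.
$$

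The last step is to sum each projector's entries over $a\notin A$, $b\notin B$. The three elementary sums are $\sum J_{a,b}=(2m-|A|)(2m-|B|)$, $\sum I_{a,b}=2m-|A\cup B|$, and $\sum P_{a,b}=|\{a\notin A:a^{-1}\notin B\}|=2m-|A\cup B^{-1}|$. Using $|A\cup B|=|A|+|B|-|A\cap B|$ and $|A\cup B^{-1}|=|A|+|B|-|A^{-1}\cap B|$, together with the definitions $x=|A\cap B|-|A^{-1}\cap B|$ and $y=|A\cap B|+|A^{-1}\cap B|$, the three contributions combine; multiplying through by $2m$ and regrouping the $I$- and $P$-terms isolates $xm$ as the constant coefficient and $(-1)^n(|A||B|-ym)$ as the alternating one, producing the claimed closed form.

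The main obstacle is purely bookkeeping in the final assembly: the $(-1)^{n-1}$ coming from the $E_{-1}$ eigenvalue combines with the constant $E_{+1}$ contribution in a way that must be split into an $n$-independent piece (yielding $xm$) and an alternating piece (yielding $(-1)^n(|A||B|-ym)$), and one must carefully use that $|A^{-1}\cap B|=|A\cap B^{-1}|$ to reach the specific form stated. One should also note that the formula is valid already at $n=1$, where $M^{0}=I$ gives $f_{A,B}(1)=2m-|A\cup B|$, agreeing with the displayed expression once $x+y=2|A\cap B|$ is used.
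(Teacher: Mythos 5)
Your argument is correct, and I have checked the final bookkeeping: with $S_J=(2m-|A|)(2m-|B|)$, $S_I=2m-|A\cup B|$ and $S_P=2m-|A\cup B^{-1}|$, one indeed gets $m(S_I-S_P)=xm$ and $m(S_I+S_P)-S_J=ym-|A||B|$, which after absorbing the sign $(-1)^{n-1}$ reproduces the stated closed form; the spectral data of $M=J-P$ (eigenvalue $2m-1$ on $\mathbf 1$, eigenvalue $-1$ on the trace-zero part of the $P$-symmetric subspace, eigenvalue $+1$ on the $P$-antisymmetric subspace) is also right, since $P$ is the fixed-point-free involution $i\mapsto i^{-1}$ and hence $M$ is symmetric and the three projectors sum to $I$. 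Note, however, that the paper does not prove this proposition at all: it is imported verbatim with a citation to \cite[Proposition 1]{EJC2006}, so there is no internal proof to compare yours against. Your transfer-matrix derivation is therefore a self-contained justification of a black-box ingredient, which is more than the paper offers; it also makes transparent why the answer splits into a dominant $(2m-1)^{n-1}$ term, a constant term and an alternating term (one summand per eigenvalue of $M$), and why the constants $x$ and $y$ arise as the difference and sum of $|A\cap B|$ and $|A^{-1}\cap B|$ (these come from summing the entries of $I\pm P$ over $\bar A\times\bar B$, using $|A\cap B^{-1}|=|A^{-1}\cap B|$). The only caveat worth recording is that the formula is asserted for $n\ge 1$ (your $n=1$ sanity check covers the base case where $M^{0}=I$), and that the whole argument uses $a\neq a^{-1}$ for every letter, which is automatic in a free group.
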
 
Let $A=\{\alpha^{-1}\}$ and $B=\{\beta^{-1}\}$; then since the number of possible $\phi(a_i)$ of length $\leq n$ is equal to the number of reduced words of length $\leq n-2$ starting with a letter different from $\alpha^{-1}$ and ending with a letter different from $\beta^{-1}$, this number is $\sum_{1\leq j \leq n-2} f_{A,B}(j)$. Since $|A|=|B|=1$, $f_{A,B}(j)$ is asymptotically $(2m-1)^j$, and the number of possible $\phi(a_i)$ is $\sim c_1 (2m-1)^{n-2}$, where $c_1$ is a constant depending on $m$. 
Thus for fixed sets $F$ and $L$ the number of immersions $\phi$ with images in the ball of radius $n$ is $\sim (c_1(2m-1)^{n-2})^k $. Since there are only finitely many choices for sets $F$ and $L$ of first and last letters, respectively, and the number of $k$-tuples of elements in $F(\Delta)$ of length $\leq n$ is $\sim (c_2(2m-1)^n)^k$ for some constant $c_2$, the number of immersions over the total number of maps $F(\Sigma) \mapsto F(\Delta)$ is $\sim \frac{(c_1(2m-1)^{n-2})^k}{(c_2(2m-1)^n)^k}$; so as $n \mapsto \infty$, this ratio is a positive constant depending on $k$ and $m$.



\bibliographystyle{amsalpha}
\bibliography{BibTexBibliography}

\end{document}